\newtheorem{theo}{Theorem}
\newtheorem{lem}{Lemma}
\newcommand\la{\lambda}
\title{Narayana polynomials and\\  
Hall-Littlewood symmetric functions}
\author{Michel Lassalle\\
\small Centre National de la Recherche Scientifique\\[-0.8ex]
\small Institut Gaspard-Monge, Universit\'e de Marne-la-Vall\'ee\\[-0.8ex]
\small 77454 Marne-la-Vall\'ee Cedex, France\\[-0.8ex]
\small \texttt{lassalle@univ-mlv.fr}\\[-0.8ex]
\small \texttt{http://igm.univ-mlv.fr/{\textasciitilde}lassalle}
}
\date{}
\begin{document}
\maketitle

\begin{abstract}
We show that Narayana polynomials are a specialization of row Hall-Littlewood symmetric functions. Using $\la$-ring calculus, we generalize to Narayana polynomials the formulas of Koshy and Jonah for Catalan numbers.\\

\noindent {\em 2010 Mathematics Subject Classification:} 05E05.

\noindent {\em Keywords:} Symmetric functions, $\la$-rings, Narayana polynomials.
\end{abstract}

\section{Introduction}

There are many possible $q$-generalizations of Catalan numbers~\cite[exercise 34]{S}. In this paper we shall deal with the following one, perhaps less popular than others.

Let $q$ be an indeterminate. For any nonnegative integer $n$, we denote by $C_n(q)$ the polynomial in $q$ defined by $C_0(q)=1$ and the recurrence formulas
\begin{align*}
C_n(q)&=(1-q)C_{n-1}(q)+q\,\sum_{i=0}^{n-1}C_i(q)C_{n-i-1}(q)\\
&=(1+q)C_{n-1}(q)+q\,\sum_{i=1}^{n-2}C_i(q)C_{n-i-1}(q),
\end{align*}
the second relation valid for $n\ge3$. Clearly we have $C_n(0)=1$, $C_n(1)=C_n$, the ordinary Catalan number, and $C_n(2)=s_n$, the small Schr\"{o}der number~\cite{G}. The first values of $C_n(q)$ are given by
\begin{align*}
C_1(q)=1,\quad C_2(q)=q+1,&\quad C_3(q)=q^2+3q+1,\\ 
C_4(q)=q^3+6q^2+6q+1, &\quad C_5(q)=q^4+10q^3+20q^2+10q+1.
\end{align*}

Experts will at once recognize the symmetric distribution of the Narayana numbers $N(n,k)$ defined by
\[ N(n,k)=\frac{1}{n} \binom{n}{k-1} \binom{n}{k}.\]
Using known results about generating functions~\cite{S,Z}, we have
\[C_n(q)=\sum_{k=1}^{n} N(n,k) q^{k-1},\]
the Narayana polynomial. It is sometimes more simple to consider the ``large'' Narayana polynomial, defined by $\mathcal{C}_0(q)=1$ and $\mathcal{C}_n(q)=q\,C_n(q)$ for $n\ge1$. Then we have $\mathcal{C}_n(2)=R_n$, the large Schr\"{o}der number~\cite{G}. 

There is a rich combinatorial litterature on this subject. Here we shall only refer to~\cite{MS,Su} and references therein. The purpose of this paper is to show that $C_n(q)$ has a deep connection with the theory of symmetric functions. 

Let $P_k(x;q)$ denote the one-row Hall-Littlewood symmetric function associated with $q$~\cite[Chapter 3]{Ma}. It is known that $P_k(q)$ interpolates between the power-sum $p_k$ and the complete symmetric function $h_k$, namely
\[P_k(0)=h_k,\qquad P_k(1)=p_k.\]
We denote by $P_k(1^{m};q)$ the value of $P_k(x;q)$ taken at the $m$-vector $x=(1,\ldots,1)$. In Section 3, we prove
\begin{equation*}
C_n(1-q)=\frac{1}{n+1}P_n(1^{n+1};q).
\end{equation*}
This result might be obtained by using classical properties of symmetric functions~\cite{Ma}. However it is more interesting to prove it in the framework of $\lambda$-rings.

The powerful formalism of $\lambda$-rings not only allows a compact presentation. It gives a quick access to deep properties, which would be much more difficult to get otherwise. We shall illustrate this efficiency through several examples. 

A recursive formula for Catalan numbers is given by Koshy as follows~\cite[p.322]{K}
\[C_n=\sum_{k=1}^n(-1)^{k-1}\binom{n-k+1}{k}C_{n-k}.\]
In Section 4, we generalize this relation to Narayana polynomials as
\[C_n(q)=(1-q)^{n-1}+q\sum_{k=1}^{n-1}C_{n-k}(q)
\sum_{m=0}^{k-1} (-1)^m\binom{k-1}{m}\binom{n-m}{k} (1-q)^{k-m-1}.\]
Similarly in Section 5, we extend Jonah's formula for Catalan numbers~\cite[p.325]{K}
\[\sum_{k=0}^r\binom{n-2k}{r-k}C_{k}=\binom{n+1}{r}\]
in two ways. The first one is given by
\begin{multline*}
C_r(q)+q\sum_{k=1}^{r-1} C_{r-k}(q) 
\sum_{m=0}^{k-1}(q-1)^{m}\binom{k-1}{m}\binom{n-2r+2k-m}{k}
\\
=\sum_{m=0}^{r-1}(q-1)^{m}\binom{r-1}{m}\binom{n-m}{r-1}.
\end{multline*}
The second generalization is given for large Narayana polynomials and may be written as
\[\sum_{k=0}^r \mathcal{C}_k(q) \sum_{m=0}^{r-k}(1-q)^m \binom{n-2k-m}{r-k-m}\binom{k+m}{m}=\binom{n+1}{r}.\]
Finally in Section 6, we determine the transition matrix between the large Narayana polynomials corresponding to two variables $q$ and $q^{\prime}$, namely 
\[
(n+1)\mathcal{C}_n(q^{\prime})=\sum_{k=0}^n \mathcal{C}_{n-k}(q)\sum_{i+j=0}^{k}
(1-q)^i(q^{\prime}-1)^j \binom{n-k+i}{i} \binom{n+1}{j} \binom{2k-i-j-1}{k-i-j}.
\]
We emphasize that these four examples are easy consequences of \textit{the same} elementary $\la$-ring identity. It would be very interesting to obtain a combinatorial interpretation of these algebraic results. 

It is a pleasure to thank Christian Krattenthaler, Alain Lascoux and Doron Zeilberger for useful remarks. 

\section{Some identities}

We consider the generating function 
\[\mathbf{C}_q(u)=1+\sum_{r\ge 1}C_r(q)u^r.\] 
By definition it satisfies the relation
\begin{equation}\label{equnun}
(\mathbf{C}_q(u)-1)/u=(1-q)\mathbf{C}_q(u)+q\,\mathbf{C}_q(u)^2,
\end{equation}
namely
\begin{equation*}
qu\,\mathbf{C}_q(u)^2+(u(1-q)-1)\mathbf{C}_q(u)+1=0.
\end{equation*}
The only regular solution at $u=0$ is 
\begin{equation}\label{equnde}
\mathbf{C}_q(u)=\frac{1-(1-q)u-\sqrt{(1-q)^2u^2-2(1+q)u+1}}{2qu}.
\end{equation}
It is proved in~\cite{Z} that this solution is given by
\[\mathbf{C}_q(u)=\sum_{n\ge0, k\ge 0}N(n,k+1) u^n q^k.\] 
Another proof may be found in~\cite[exercise 36]{S}, which gives
\[\sum_{n\ge1, k\ge 1}N(n,k) u^n q^k=q\,(\mathbf{C}_q(u)-1).\] 
Therefore $C_r(q)$ is the Narayana polynomial
\[C_r(q)=\sum_{k=1}^{r} \frac{1}{r} \binom{r}{k-1} \binom{r}{k} q^{k-1}.\]

In this section, we shall derive some alternative expressions of $C_r(q)$ in a unified way. The latter are very well known, but our proof is quite elementary. It uses the binomial expansion
\[(1+a)^{1/2}=1-2\sum_{i\ge0}\binom{2i}{i}\Big(\frac{-1}{4}\Big)^{i+1}\frac{a^{i+1}}{i+1},\]
together with four possible ways of writing $(1-q)^2u^2-2(1+q)u+1$.

Actually with $\eta$ and $\zeta$ equal to $\pm 1$, we have
\[(1-q)^2u^2-2(1+q)u+1=(1+(\eta+\zeta q)u)^2\left(1-2u\frac{1+\eta+q(1+\zeta)+qu(1+\eta\zeta)}{(1+(\eta+\zeta q)u)^2}\right),\] 
which yields
\begin{align*}
\mathbf{C}_q(u)&+
\frac{1+\eta-q(1-\zeta)}{2q}=\\
&q^{-1}
\sum_{i\ge0}\binom{2i}{i}2^{-i-1}
\frac{u^i}{i+1}\Big(1+\eta+q(1+\zeta)+qu(1+\eta\zeta)\Big)^{i+1}\Big(1+(\eta+\zeta q)u\Big)^{-2i-1}.
\end{align*}
Expanding two times the right-hand side by the binomial formula, it becomes
\[q^{-1}
\sum_{i,j,k\ge0}\binom{2i}{i}{2}^{-i-1}\frac{u^{2i+j-k+1}}{i+1}
\binom{i+1}{k}(1+\eta+q(1+\zeta))^{k}
(q(1+\eta\zeta))^{i+1-k}
(-\eta-\zeta q)^j \binom{2i+j}{j}.\]
This implies
\begin{multline}\label{equn}
C_r(q)=
\sum_{\begin{subarray}{c}i,j\ge 0 \\ i+j \le r\end{subarray}}
{2}^{-i-1}q^{r-i-j-1} 
(1+\eta+q(1+\zeta))^{2i+j-r+1}\,(1+\eta\zeta)^{r-i-j}\,
(-\eta-\zeta q)^j\\
\binom{i+1}{r-i-j}\binom{2i+j}{j}
\frac{1}{i+1}\binom{2i}{i}.
\end{multline}

In a second step we specialize $\eta$ and $\zeta$ to $\pm 1$. For $\eta=-\zeta=1$ we obtain 
\begin{equation}\label{eqde}
\mathcal{C}_r(q)=q\,C_r(q)=\frac{1}{r+1}\sum_{m=0}^r  
(q-1)^{m}\binom{r+1}{m}\binom{2r-m}{r}.
\end{equation}
Since $C_r(q)$ is a polynomial in $q$, the right-hand side is divisible by $q$, namely
\[\sum_{m=0}^r  (-1)^{m}\binom{r+1}{m}\binom{2r-m}{r}=0.\]
Hence \eqref{eqde} can be alternatively written as
\begin{equation}\label{eqtr}
C_r(q)=\frac{1}{r+1}\sum_{m=0}^r (-1)^{m-1} \frac{1-(1-q)^m}{1-(1-q)}\binom{r+1}{m}\binom{2r-m}{r}.
\end{equation}
If we apply the identity
\[\sum_{m=k+1}^r  (-1)^{m-1}\binom{r+1}{m}\binom{2r-m}{r}=(-1)^k \binom{r-1}{k}\binom{2r-k}{r},\]
proved by induction on $k$, we obtain the equivalence of \eqref{eqtr} with
\begin{equation}\label{eqqu}
C_r(q)=\frac{1}{r+1}\sum_{m=0}^{r-1} (q-1)^m \binom{r-1}{m}\binom{2r-m}{r}.
\end{equation}

For $\eta=-\zeta=-1$ relation \eqref{equn} may be written as
\begin{equation}\label{eqci}
C_r(q)=\sum_{m=0}^{r}q^{m}(1-q)^{r-m}\binom{r+m}{2m}C_m.
\end{equation}
For $\eta=\zeta=-1$ it yields
\begin{equation}\label{eqsi}
C_r(q)=\sum_{m\ge0} q^m (q+1)^{r-2m-1} \binom{r-1}{2m}C_m.
\end{equation}
Finally for $\eta=\zeta=1$ we obtain 
\begin{equation*}
C_r(q)=\sum_{\begin{subarray}{c}i,j\ge 0 \\ i+j \le r\end{subarray}} (-1)^j q^{r-i-j-1} (q+1)^{2i+2j-r+1} \binom{i+1}{r-i-j}\binom{2i+j}{j}C_i.
\end{equation*}
Comparing with \eqref{eqsi}, by identification of the coefficients of $q^m (q+1)^{r-2m-1}$, and writing $r= m+n+1$, we get the interesting identity
\begin{equation*}
\sum_{i=m}^n (-1)^{n-i}
\binom{n+i}{2i}\binom{i+1}{m+1}C_i=\binom{m+n}{2m}C_m.
\end{equation*}
A direct proof might be obtained by using
\[\sum_{i=m}^n (-1)^{n-i}\binom{n+i}{i-m}\binom{n}{i}=\binom{n}{m},\] 
a variant of the Chu-Vandermonde identity. The former identity written for $m=n-1$ or $m=n-2$ gives the classical recursive formula
\[C_{r+1}=2\frac{2r+1}{r+2}C_r.\]

Relations \eqref{eqde} and \eqref{eqsi} are respectively (1.3) and (1.1) of~\cite{MS}, where references to other proofs are given. All identities are trivial for $q=1$, except \eqref{eqsi} which gives a quick proof of the celebrated Touchard's identity~\cite[p. 319]{K}
\[C_r=\sum_{m\ge0} 2^{r-2m-1} \binom{r-1}{2m}C_m.\]

\section{A $\la$-ring exercise}

Here we only give a short survey of $\la$-ring theory. More details and other applications may be found in~\cite{Las,LL,La}, or (but not explicitly) in some examples of~\cite{Ma} (see pp. 25, 43, 65 and 79).

Let $\mathbb{S}$ denote the ring of symmetric functions. The classical bases of elementary functions $e_k$, complete functions $h_{k}$, and 
power sums $p_k$ generate $\mathbb{S}$ algebraically. Schur functions $s_\mu$, monomial symmetric functions $m_\mu$, and products $e_\mu, h_\mu, p_\mu$ are linear bases of $\mathbb{S}$, indexed by partitions.

Let $X=\{x_1,x_2,x_3,\ldots\}$ be a (finite or infinite) set of
independent indeterminates (an ``alphabet''). We define an action $f \rightarrow f[\,\cdot\,]$ of $\mathbb{S}$ on the ring 
$\mathbb{R}[X]$ of polynomials in $X$ with real coefficients. Since the power sums generate $\mathbb{S}$, it is enough to define the action of $p_{k}$ on $\mathbb{R}[X]$. Writing any polynomial as 
$\sum_{c,P} c P$, with $c$ a ``constant'' and $P$ a monomial 
in ``variables'', we define
$$p_{k} \left[\sum_{c,P} c P\right]=\sum_{c,P} c P^{k}.$$

Of course this action is strongly dependent on the status of any indeterminate, which may be chosen as a ``constant'' or a ``variable'', or a combination of both. Therefore each status must be carefully specified. For instance, if $Q=1-q$ is a variable, we have $p_k[q]=1-Q^k=1-(1-q)^k$, but not $p_k[q]=q^k$. Variables are referred to as ``elements of rank 1''.

If all elements of $X$ are of rank 1, we write $X^{\dag}=\sum_{i} x_i$. By definition we have
\[p_{k} [X^{\dag}]=\sum_{i} x_i^k=p_{k}(X), \quad \mathrm{hence}
\quad f[X^\dag]=f(X)\]
for any symmetric function $f$. Similarly $f[m]=f(1,\ldots,1)$, the value of $f$ at the $m$-vector $(1,\ldots,1)$.

This action of $\mathbb{S}$ on $\mathbb{R}[X]$ has two fundamental properties. Firstly for any polynomials $P,Q$ we have
\begin{equation}\label{eqse}
h_n[P+Q]= \sum_{k=0}^n h_{n-k} [P] \ h_k [Q].
\end{equation}
Denoting the generating series by $H_u=\sum_{k\ge 0} u^k h_k$, this can be also written as
\begin{equation}\label{eqhu}
H_u[P+Q]=H_u[P]\,H_u[Q], \qquad H_u[P-Q]=H_u[P]\,{H_u[Q]}^{-1}.
\end{equation}
Secondly we have three ``Cauchy formulas''
\begin{align*}
h_n [PQ]=& \sum_{|\mu|=n} 
z_{\mu}^{-1}p_{\mu} [P] \, p_{\mu} [Q]\\
=&\sum_{|\mu|= n} h_{\mu} [P] \, m_{\mu} [Q]\\
=&\sum_{|\mu|= n} s_{\mu} [P] \, s_{\mu} [Q],
\end{align*}
where as usual, we denote by $|\mu|$ the weight of a partition, $l(\mu)$ and $\{m_i(\mu), i\ge 1\}$ its length and multiplicities, and $z_\mu  = \prod_{i} i^{m_i(\mu)} m_i(\mu) !$.
 
\begin{lem}
(i) For a constant $c$ we have
\[p_k[c]=c, \quad h_k[c]= \binom{c+k-1}{k},\quad e_k[c]= \binom{c}{k},
\quad m_\mu[c]=\binom{c}{l(\mu)}\frac{l(\mu)!}{\prod_i m_i(\mu)!}.\]
(ii) If $q$ is an element of rank 1, we have
\[p_k[q]= h_k[q]=q^k, \qquad e_k[q]=q\,\delta_{k1}.\]
(iii) If $1-q$ is an element of rank 1 we have 
\begin{align*}
p_k[q]=1-(1-q)^k,&\qquad p_k[-q]=(1-q)^k-1, \\
h_k[q]=(-1)^ke_k[-q]= q,&\qquad e_k[q]=(-1)^kh_k[-q]= q(q-1)^{k-1}.
\end{align*}
Moreover we have $s_{\mu}[q]=0$ and $s_{\mu}[-q]=0$, except if the partition $\mu$ is a hook $(a,1^b)$ in which case $s_{\mu}[q]=q(q-1)^b$ and $s_{\mu}[-q]=(-1)^{b+1}q(1-q)^{a-1}$.
\end{lem}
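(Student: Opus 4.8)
The plan is to treat the three parts in order of increasing difficulty, in each case reducing everything to the defining rule $p_k[\sum cP]=\sum cP^k$ together with three classical facts: the generating identity $H_u=\exp\bigl(\sum_{k\ge1}p_ku^k/k\bigr)$, its companion $E_u=\exp\bigl(\sum_{k\ge1}(-1)^{k-1}p_ku^k/k\bigr)=H_{-u}^{-1}$, and the multiplicativity \eqref{eqhu} along with its evident analogue $E_u[P+Q]=E_u[P]\,E_u[Q]$. Since the $p_k$ are algebraically independent generators of $\mathbb{S}$, every map $f\mapsto f[P]$ is a ring homomorphism, so these identities may be applied after any substitution of the variables.

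For (i) the rule gives $p_k[c]=c$, a constant carrying only the trivial monomial; applying the homomorphism $f\mapsto f[c]$ to the two exponential identities yields $H_u[c]=\exp\bigl(c\sum_{k\ge1}u^k/k\bigr)=(1-u)^{-c}$ and $E_u[c]=(1+u)^{c}$, and reading off $[u^k]$ gives $h_k[c]=\binom{c+k-1}{k}$ and $e_k[c]=\binom{c}{k}$. For $m_\mu[c]$ I would argue by polynomiality: $m_\mu$ is a polynomial in the $p_k$, so $c\mapsto m_\mu[c]$ is a polynomial, and for a positive integer $c=N$ one has $m_\mu[N]=m_\mu(1^N)$, which counts the distinct rearrangements of the exponent vector of $\mu$ padded with $N-l(\mu)$ zeros, namely $\binom{N}{l(\mu)}\,l(\mu)!/\prod_i m_i(\mu)!$; two polynomials agreeing at infinitely many integers are equal. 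Part (ii) is the same argument for the single rank-$1$ letter $q$: $p_k[q]=q^k$ by the rule, $H_u[q]=(1-qu)^{-1}$ gives $h_k[q]=q^k$, and $E_u[q]=1+qu$ gives $e_k[q]=q\,\delta_{k1}$.

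For (iii) I would write $q=1-Q$ with $Q=1-q$ the rank-$1$ letter. Additivity of power-sum plethysm gives $p_k[q]=p_k[1]-p_k[Q]=1-(1-q)^k$, hence $p_k[-q]=(1-q)^k-1$. From \eqref{eqhu} and its $E_u$-analogue, $H_u[q]=H_u[1]\,H_u[Q]^{-1}=(1-u)^{-1}\bigl(1-(1-q)u\bigr)$ and $E_u[q]=E_u[1]\,E_u[Q]^{-1}=(1+u)\bigl(1+(1-q)u\bigr)^{-1}$, and likewise for $-q=Q-1$; expanding these rational functions in $u$ and extracting $[u^k]$ for $k\ge1$ produces $h_k[q]=q$, $e_k[q]=q(q-1)^{k-1}$, $h_k[-q]=-q(1-q)^{k-1}$ and $e_k[-q]=(-1)^kq$, from which the asserted relations $h_k[q]=(-1)^ke_k[-q]$ and $e_k[q]=(-1)^kh_k[-q]$ are immediate (they are instances of $h_n[-X]=(-1)^ne_n[X]$).

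The genuinely substantial point is the Schur-function assertion, which I would obtain from the Jacobi--Trudi determinant $s_\mu=\det\bigl(h_{\mu_i-i+j}\bigr)_{1\le i,j\le l(\mu)}$. With the values just found ($h_0[q]=1$, $h_k[q]=q$ for $k\ge1$, $h_k[q]=0$ for $k<0$), row $i$ of the matrix is the constant row $(q,\dots,q)$ exactly when $\mu_i\ge i$; this always holds for $i=1$, and it holds for $i=2$ precisely when $\mu_2\ge2$, i.e.\ exactly when $\mu$ is not a hook, in which case two equal rows force $s_\mu[q]=0$. When $\mu=(a,1^{b})$ the matrix has first row $(q,\dots,q)$, ones on the subdiagonal, zeros strictly below it and $q$ everywhere else; the reduction $R_1\mapsto R_1-qR_2$ pulls out a factor $q(1-q)$ and, after a cofactor expansion along the first column, leaves a determinant of the same shape of order one less, so iterating gives $s_{(a,1^{b})}[q]=q(q-1)^{b}$, independently of $a$. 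Finally, using the plethystic form $s_\mu[-X]=(-1)^{|\mu|}(\omega s_\mu)[X]=(-1)^{|\mu|}s_{\mu'}[X]$ (see~\cite[I.3]{Ma}) and $\mu'=(b+1,1^{a-1})$, the same computation for the conjugate hook gives $s_{(a,1^{b})}[-q]=(-1)^{a+b}q(q-1)^{a-1}=(-1)^{b+1}q(1-q)^{a-1}$, with vanishing for non-hooks inherited from the $+q$ case. The main obstacle is precisely this determinant bookkeeping: checking that the constant row occurs just once exactly for hooks, and running the iterated row reduction with the right signs so the accumulated factor comes out $q(q-1)^{b}$ and not $q(1-q)^{b}$; the rest is routine coefficient extraction.
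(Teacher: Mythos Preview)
Your argument is correct. For parts (i)--(iii) on $p_k,h_k,e_k$ you arrive at the same generating-function identities as the paper, only reaching $H_u[c]=(1-u)^{-c}$ via the exponential $H_u=\exp\bigl(\sum_{k\ge1}p_ku^k/k\bigr)$ rather than via $\sum_{|\mu|=k}z_\mu^{-1}c^{l(\mu)}$; these are equivalent one-line computations.

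Two genuine differences are worth noting. For $m_\mu[c]$, the paper deduces the value from the second Cauchy formula $h_k[c]=\sum_{|\mu|=k}m_\mu[c]$ together with a composition-counting identity, whereas you use polynomiality in $c$ and the direct enumeration of $m_\mu(1^N)$ as placements of the nonzero parts. Your route is shorter and entirely self-contained; the paper's route has the virtue of staying inside the Cauchy-formula framework that drives the rest of the article. For the Schur-function values, the paper simply cites \cite[p.~11]{Las}, while you give a full Jacobi--Trudi computation: the observation that rows~$1$ and~$2$ coincide whenever $\mu_2\ge 2$ is exactly the right way to kill non-hooks, and the iterated reduction on the hook matrix is correct. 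One small imprecision: the single step of your recursion gives $D_b=(q-1)D_{b-1}$ (factor $(1-q)$ from the new first row, then a sign from the cofactor), with the lone factor $q$ coming from the base case $D_0=h_a[q]=q$; your phrase ``pulls out a factor $q(1-q)$'' conflates these, but you explicitly flag the sign bookkeeping as the delicate point and land on the right answer $q(q-1)^b$. The passage to $s_\mu[-q]$ via $s_\mu[-X]=(-1)^{|\mu|}s_{\mu'}[X]$ is clean and matches what \cite{Las} would give.
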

\begin{proof}Denoting $E_u=\sum_{k\ge 0} u^k e_k$, we have~\cite[p.25]{Ma} $E_u=(H_{-u})^{-1}$ and
\[h_k=\sum_{|\mu|=k}z_{\mu}^{-1}p_\mu,\qquad 
e_k=\sum_{|\mu|=k} (-1)^{k-l(\mu)} z_{\mu}^{-1}p_\mu.\]
If $c$ is a constant, one has~\cite[Example 1.2.1, p.26]{Ma}
\[h_k[c]=\sum_{|\mu|=k}z_{\mu}^{-1}c^{l(\mu)}=\binom{c+k-1}{k}.\]
Hence $H_u[c]=(1-u)^{-c}$ and $E_u[c]=(1+u)^{c}$, i.e. $e_k[c]= \binom{c}{k}$. The second Cauchy formula implies
\[h_k [c]=\sum_{|\mu|= k} h_{\mu} [1] \, m_{\mu} [c]=
\sum_{|\mu|= k} m_{\mu} [c].\]
The value of $m_{\mu} [c]$ follows since we have
\[\binom{c+k-1}{k}=\sum_{p=1}^k\sum_{k_1+\ldots+k_p=k} \binom{c}{p},\]
where the second sum is taken on sequences made of $p$ (strictly) positive integers summing to $k$ (compositions of length $p$ and weight $k$).

If $q$ is of rank 1, we have
\[h_k[q]=\sum_{|\mu|=k}z_{\mu}^{-1}q^{|\mu|}=q^k.\]
Hence $H_u[q]=(1-uq)^{-1}$ and $E_u[q]=1+uq$. If $Q=1-q$ is of rank 1, using \eqref{eqhu} one has 
\begin{align*}
H_u[q]&=H_u[1-Q]=H_u[1]H_u[Q]^{-1}=\frac{1-uQ}{1-u}=1+\frac{uq}{1-u},\\
E_u[q]&=\frac{1+u}{1+uQ}=1+\frac{uq}{1-u(q-1)}.
\end{align*}
For the evaluation of $s_\mu[1-Q]$ and $s_\mu[Q-1]$, we refer to~\cite[p.11]{Las}. They are respectively equal to $(-Q)^b(1-Q)$ and $(-1)^bQ^{a-1}(Q-1)$ for $\mu=(a,1^b)$. 
\end{proof}

We consider the one-row Hall-Littlewood symmetric function $P_r(X;q)$. As a straightforward consequence of its definition~\cite[(2.9), p.209]{Ma}, we have $P_0(X;q)=1$ and for $r\ge 1$,
\[P_r(X;q)=(1-q)^{-1} g_r(X;q)\quad \mathrm{with} \quad 
\sum_{k\ge 0} u^k g_k(X;q)= \prod_{i\ge 1} \frac{1-qux_i}{1-ux_i}.\]
If $q$ and all indeterminates $x_i$ are elements of rank 1, using \eqref{eqhu} we have  
\[\prod_{i\ge 1} \frac{1-qux_i}{1-ux_i}=H_u[X^{\dag}] H_u[qX^{\dag}]^{-1}=H_u[(1-q)X^{\dag}].\]
In other words, for $r\ge1$ we have
\[P_r(X;q)=(1-q)^{-1} h_r[(1-q)X^{\dag}].\]
\begin{theo}
For $r\ge 1$ we have
\begin{equation*}
P_r(1^{n};q)=\sum_{m=0}^{r-1}(-q)^m
\binom{r-1}{m}\binom{n+r-m-1}{r}.
\end{equation*}
\end{theo}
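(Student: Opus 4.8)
I would start from the formula $P_r(X;q)=(1-q)^{-1}h_r[(1-q)X^{\dag}]$ obtained just above the statement, valid when $q$ and all the $x_i$ are of rank $1$, and specialize $X$ to the alphabet $1^{n}=(1,\ldots,1)$, so that $X^{\dag}$ becomes the sum of $n$ rank-$1$ copies of $1$. The plan is then to expand $h_r[(1-q)X^{\dag}]$ by the second Cauchy formula $h_r[PQ]=\sum_{|\mu|=r}h_\mu[P]\,m_\mu[Q]$ with $P=1-q$ and $Q=X^{\dag}$. Both inputs are immediate: from \eqref{eqhu} one gets $H_u[1-q]=(1-uq)/(1-u)=1+(1-q)\sum_{k\ge1}u^{k}$, so that $h_k[1-q]=1-q$ for every $k\ge1$, whence $h_\mu[1-q]=(1-q)^{l(\mu)}$ for all partitions $\mu$; and from the Lemma, $m_\mu[X^{\dag}]=m_\mu(1,\ldots,1)=\binom{n}{l(\mu)}\,l(\mu)!/\prod_i m_i(\mu)!$.

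Next I would group the resulting sum $\sum_{|\mu|=r}(1-q)^{l(\mu)}m_\mu(1^{n})$ according to the length $l=l(\mu)$, using that $\sum_{|\mu|=r,\,l(\mu)=l}l!/\prod_i m_i(\mu)!$ counts the compositions of $r$ into $l$ positive parts, i.e. equals $\binom{r-1}{l-1}$. This gives
\[
h_r\big[(1-q)X^{\dag}\big]\big|_{X=1^{n}}=\sum_{l=1}^{r}(1-q)^{l}\binom{n}{l}\binom{r-1}{l-1},
\qquad\text{hence}\qquad
P_r(1^{n};q)=\sum_{l=1}^{r}(1-q)^{l-1}\binom{n}{l}\binom{r-1}{l-1}.
\]
(The same intermediate identity also drops out directly from $\sum_k u^{k}g_k(1^{n};q)=\big((1-qu)/(1-u)\big)^{n}$ by writing $1-qu=(1-u)+u(1-q)$ and reading off the coefficient of $u^{r}$; I would mention this as a $\lambda$-ring-free alternative.)

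It then remains to pass from powers of $1-q$ to powers of $-q$. Expanding $(1-q)^{l-1}=\sum_{m=0}^{l-1}\binom{l-1}{m}(-q)^{m}$ and interchanging the two summations, the coefficient of $(-q)^{m}$ becomes $\sum_{l=m+1}^{r}\binom{n}{l}\binom{r-1}{l-1}\binom{l-1}{m}$. Using the subset-of-a-subset identity $\binom{r-1}{l-1}\binom{l-1}{m}=\binom{r-1}{m}\binom{r-1-m}{l-1-m}$ and then the Chu--Vandermonde convolution $\sum_{j\ge0}\binom{n}{j+m+1}\binom{r-1-m}{j}=\binom{n+r-m-1}{r}$, this collapses to $\binom{r-1}{m}\binom{n+r-m-1}{r}$, which is exactly the asserted formula; the vanishing of $\binom{r-1}{m}$ for $m\ge r$ accounts for the stated range $0\le m\le r-1$. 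I do not expect a genuine obstacle here: the steps are a Cauchy expansion and a Vandermonde sum, both routine. The only points that need care are keeping the rank-$1$ versus constant status of $q$ (and of the entries of $X$) consistent when invoking the Lemma, and tracking the summation ranges carefully so that the spurious terms really do drop out.
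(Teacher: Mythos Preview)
Your argument is correct, but it follows a different route from the paper's. The paper computes $h_r[(1-q)n]$ via the \emph{third} Cauchy formula $h_r[PQ]=\sum_\mu s_\mu[P]\,s_\mu[Q]$: since $s_\mu[1-q]$ vanishes unless $\mu$ is a hook and equals $(1-q)(-q)^m$ for $\mu=(r-m,1^m)$, the powers of $(-q)$ appear immediately, and the hook evaluation $s_{(a,1^b)}[n]=\binom{a+b-1}{b}\binom{a+n-1}{a+b}$ gives the stated binomials in one stroke. You instead use the \emph{second} Cauchy formula with $h_\mu[1-q]=(1-q)^{l(\mu)}$ and $m_\mu[n]$, group by length to reach the intermediate form $\sum_{l}(1-q)^{l-1}\binom{n}{l}\binom{r-1}{l-1}$, and then convert from $(1-q)$-powers to $(-q)$-powers by a Chu--Vandermonde manipulation. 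This is exactly the technique the paper reserves for its \emph{second} proof of Theorem~2; you have effectively transplanted it to Theorem~1. What your route buys is that it avoids Schur functions and the hook identity~\eqref{eqne} entirely (and, as you note, even admits a $\lambda$-ring-free reading from $((1-qu)/(1-u))^n$); what the paper's route buys is that the desired $(-q)^m$ structure falls out for free, with no trailing binomial identity to check.
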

\begin{proof}
We must compute
\[P_r(1^n;q)=(1-q)^{-1} h_r[(1-q)n],\]
with $q$ of rank 1. Writing $q=1-Q$, by Lemma 1 (iii) together with the last Cauchy formula, we have
\[
h_r[Qn]=\sum_{m=0}^{r-1}s_{(r-m,1^{m})}[Q]s_{(r-m,1^{m})}[n]
=\sum_{m=0}^{r-1}Q(Q-1)^ms_{(r-m,1^{m})}[n].
\]
The assertion is then a consequence of 
\begin{equation}\label{eqne}
s_{(a,1^b)}[c]=\binom{a+b-1}{b}\binom{a+c-1}{a+b},
\end{equation}
for any real number $c$. This can be proved in many ways, for instance by induction, using Lemma 1 (i) and the Pieri formula
\[h_ae_b=s_{(a,1^b)}+s_{(a+1,1^{b-1})},\]
which imply
\[s_{(a,1^b)}[c]+s_{(a+1,1^{b-1})}[c]=\binom{a+c-1}{a}\binom{c}{b}.\]
\end{proof}

We shall give two proofs of the following theorem. The first one relies on an identity given in Section 2. The second proof merely uses the definition of Narayana polynomials.
\begin{theo}
For $r\ge 1$ we have
\begin{equation*}
C_r(1-q)=\frac{1}{r+1}P_r(1^{r+1};q).
\end{equation*}
Equivalently in $\la$-ring notation, with $q$ an element of rank 1, we have
\begin{equation*}
\mathcal{C}_r(1-q)=(1-q)\,C_r(1-q)=\frac{1}{r+1}h_r[(1-q)(r+1)].
\end{equation*}
\end{theo}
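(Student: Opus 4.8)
As announced in the text, I would give two proofs, both reducing the statement to the single $\la$-ring identity
\[
(1-q)\,C_r(1-q)=\frac{1}{r+1}\,h_r[(1-q)(r+1)],\qquad q\text{ of rank }1.
\]
That this is equivalent to the first displayed form is immediate from the formula $P_r(X;q)=(1-q)^{-1}h_r[(1-q)X^{\dag}]$ obtained just above Theorem 1: evaluating it at $X=1^{r+1}$ gives $P_r(1^{r+1};q)=(1-q)^{-1}h_r[(1-q)(r+1)]$, and one divides by $1-q$.

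\emph{First proof.} Combine Theorem 1 with Section 2. Setting $n=r+1$ in Theorem 1 yields
\[
\frac{1}{r+1}P_r(1^{r+1};q)=\frac{1}{r+1}\sum_{m=0}^{r-1}(-q)^m\binom{r-1}{m}\binom{2r-m}{r},
\]
and replacing $q$ by $1-q$ in \eqref{eqqu} gives the very same right-hand side; hence the two coincide. This is a one-line argument once those two results are in hand.

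\emph{Second proof.} Compute $h_r[(1-q)(r+1)]$ from scratch. Since $q$ is of rank $1$, \eqref{eqhu} gives $H_u[1-q]=H_u[1]\,H_u[q]^{-1}=(1-uq)/(1-u)$, so $h_0[1-q]=1$ and $h_k[1-q]=1-q$ for all $k\ge 1$. Viewing $(1-q)(r+1)$ as the sum of $r+1$ copies of $1-q$ and iterating \eqref{eqse} — equivalently, using the second Cauchy formula together with Lemma 1(i) — one finds
\[
h_r[(1-q)(r+1)]=\sum_{j=1}^{r}\binom{r+1}{j}\binom{r-1}{j-1}(1-q)^j,
\]
because among the weak compositions $k_1+\dots+k_{r+1}=r$ exactly $\binom{r+1}{j}\binom{r-1}{j-1}$ have $j$ nonzero parts, each contributing $(1-q)^j$. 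Finally, the elementary identity
\[
\frac{1}{r+1}\binom{r+1}{j}\binom{r-1}{j-1}=\frac{1}{r}\binom{r}{j-1}\binom{r}{j}=N(r,j)
\]
turns this into $\frac{1}{r+1}h_r[(1-q)(r+1)]=\sum_{j=1}^{r}N(r,j)(1-q)^j=(1-q)\,C_r(1-q)$, which is the definition of the Narayana polynomial evaluated at $1-q$.

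Both proofs are short. The only point demanding care is the bookkeeping of ranks: one must keep $q$ — not $1-q$ — of rank $1$, so that $h_k[1-q]=1-q$ (and not $(1-q)^k$) for $k\ge 1$; and in the second proof the count of compositions must be done correctly. The closing binomial identity is routine, and I anticipate no real obstacle.
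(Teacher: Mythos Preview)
Your proposal is correct and follows essentially the same two approaches as the paper: the first proof is identical (combine Theorem~1 at $n=r+1$ with \eqref{eqqu}), and the second proof is the same expansion via the second Cauchy formula / iterated \eqref{eqse}, the only cosmetic difference being that the paper organizes the sum over partitions $\mu$ of $r$ and then invokes the composition count $\sum_{l(\mu)=k}k!/\prod_i m_i(\mu)!=\binom{r-1}{k-1}$, whereas you go directly to weak compositions of $r$ into $r+1$ parts.
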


\begin{proof}[First proof] Immediate consequence of \eqref{eqqu} and Theorem 1.
\end{proof} 

\begin{proof}[Second proof]
Up to now, we have only used the third Cauchy formula. But we may also use the second one, which writes as
\[P_r(1^{r+1};q)=(1-q)^{-1} h_r[(1-q)(r+1)]=
(1-q)^{-1} \sum_{|\mu|= r} h_{\mu} [1-q] \, m_{\mu} [r+1],\]
with $q$ of rank 1. Applying Lemma 1 (i) and (iii), we get
\[P_r(1^{r+1};q)=\sum_{|\mu|= r}(1-q)^{l(\mu)-1}
\binom{r+1}{l(\mu)}\frac{l(\mu)!}{\prod_i m_i(\mu)!}.\]
Thus Theorem 2 amounts to prove
\begin{align*}
C_r(1-q)&=\frac{1}{r+1} \sum_{|\mu|= r}(1-q)^{l(\mu)-1}
\binom{r+1}{l(\mu)}\frac{l(\mu)!}{\prod_i m_i(\mu)!}\\
&=\sum_{k=1}^r (1-q)^{k-1}\binom{r}{k-1}
\sum_{\begin{subarray}{c}|\mu|= r \\ l(\mu)=k\end{subarray}}
\frac{(k-1)!}{\prod_i m_i(\mu)!}.
\end{align*}
This is exactly the definition of Narayana polynomials, because one has
\[\binom{r-1}{k-1}=\sum_{\begin{subarray}{c}|\mu|= r \\ l(\mu)=k\end{subarray}}\frac{k!}{\prod_i m_i(\mu)!}.\]
The latter identity counts the number of sequences made of $k$ (strictly) positive integers summing to $r$ (compositions of length $k$ and weight $r$). See~\cite[p.25]{St} for a proof and~\cite[Theorem 1, p.461]{La3} for a generalization.
\end{proof}

As for the first Cauchy formula, with $q$ of rank 1, it implies
\begin{align*}
P_r(1^{r+1};q)&=(1-q)^{-1} h_r[(1-q)(r+1)]\\
&=(1-q)^{-1} \sum_{|\mu|= r} z_{\mu}^{-1}p_{\mu} [1-q] \, p_{\mu} [r+1]\\
&=(1-q)^{-1} \sum_{|\mu|= r} z_{\mu}^{-1}(r+1)^{l(\mu)}\prod_{i\ge 1}(1-q^i)^{m_i(\mu)}.
\end{align*}
By Theorem 2 we obtain
\[\mathcal{C}_r(q)=q\,C_r(q)=\sum_{|\mu|= r} z_{\mu}^{-1}(r+1)^{l(\mu)-1}\prod_{i\ge 1}(1-(1-q)^i)^{m_i(\mu)}.\]
This formula is new. It is obvious for $q=1$, since by applying Lemma 1 (i) we have
\begin{equation*}
h_r[r+1]=\sum_{|\mu|=r}z_{\mu}^{-1}(r+1)^{l(\mu)}=\binom{2r}{r}=(r+1)C_r.
\end{equation*}
For $q=2$ it yields the following interesting expression for the small Schr\"{o}der numbers
\[
s_r=\sum_{\begin{subarray}{c}|\mu|= r \\ \mathrm{all}\,\mathrm{parts}\,\mathrm{odd}\end{subarray}} z_{\mu}^{-1}(2r+2)^{l(\mu)-1}.\]

\noindent\textit{Remark on Lagrange involution: }An involution $f \rightarrow f^{*}$ can be defined on $\mathbb{S}$ as follows (\cite[Example 1.2.24, p. 35]{Ma}, \cite[Section 2.4]{Las}). Let
\[u=tH_t=\sum_{k\geq0} t^{k+1}\, h_k.\]
Then $t$ can be expressed as a power series in $u$, its compositional inverse, namely
\[t=uH_u^{*}=\sum_{k\geq0} u^{k+1}\, h^{*}_k.\]
The map $h_k \rightarrow h_k^{*}$ extends to an involution of $\mathbb{S}$, called ``Lagrange involution''. Many identities are obtained in this context (see~\cite[Section 4, p.2236]{La2} for a detailed account). In particular for any polynomial $A$ we have~\cite[(4.10)]{La2}
\[(r+1)h_r^{*}[A]=h_r[-(r+1)A].\]
Therefore Theorem 2 can be equivalently written as
\[\mathcal{C}_r(1-q)=(1-q)\,C_r(1-q)=h_r^{*}[q-1],\]
with $q$ an element of rank 1.

\section{A generalization of Koshy's formula}

The following sections present some examples of the efficiency of $\la$-ring calculus. We start from a remarkable $\la$-ring identity, already mentioned in~\cite[(4.14), p.2239]{La2}.
\begin{lem}
For any polynomial $A$, any real number $z$ and any integer $n\ge1$ we have
\[\sum_{k=0}^n \frac{1}{z+k} h_{k}[-(z+k)A] \,h_{n-k}[(z+k)A]=0.\]
\end{lem}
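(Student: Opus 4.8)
The plan is to encode the alternating sum as a coefficient extraction from a generating function, using the multiplicativity of $H_u$ under addition of alphabets (formula \eqref{eqhu}). Introduce the one-variable generating series and recall that $h_n[\pm B]$ is the coefficient of $u^n$ in $H_u[\pm B]=H_u[B]^{\mp1}$. The subtlety is the weight $(z+k)$ sitting inside each bracket: each summand involves $h_k[-(z+k)A]\,h_{n-k}[(z+k)A]$, where the \emph{same} scalar $z+k$ multiplies $A$ in both factors but the index $k$ of that scalar is tied to the degree of the first factor. This is exactly the shape handled by a Lagrange-inversion / Lagrange-involution argument, which is why the identity was flagged as coming from \cite{La2}.

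Concretely, I would proceed as follows. First fix $A$ and regard $z$ as a formal parameter; it suffices to prove the identity as an identity of rational functions in $z$ (indeed as a polynomial identity after clearing the single denominator $\prod_{k=0}^n(z+k)$), so one may freely assume $z$ generic. Second, set $F(t)=H_t[A]$ and consider the substitution coming from the Lagrange involution recalled in the ``Remark on Lagrange involution'': writing $u=tH_t$, one has $t=uH_u^{*}$, and more generally the Lagrange inversion formula gives, for any power series $\Phi$,
\[
[u^n]\,\Phi(t(u))\,\big(1-t u'(t)/u(t)\big)^{-1}\ \text{-type expressions}
\]
in terms of $[t^n]\,\Phi(t)F(t)^{n}$. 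Third — and this is the heart of the matter — I would recognize the left-hand side as the $u^n$-coefficient of a product $G_1(u)\,G_2(u)$ where $G_2(u)=H_u[(z+\,\cdot\,)A]$ has its scalar shifted by the exponent of $u$, i.e. apply the operator ``multiply the coefficient of $u^k$ by $H$-data at shifted weight''; convolution of $h_k[-(z+k)A]$ with $h_{n-k}[(z+k)A]$ telescopes precisely because $-(z+k)A$ and $(z+k)A$ are negatives of each other, so that the partial sums collapse. The factor $1/(z+k)$ is then produced by the standard integration step in Lagrange inversion (the derivative $u'(t)$ brings down the shift).

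The cleanest route to finish is the identity $(r+1)h_r^{*}[B]=h_r[-(r+1)B]$ quoted in the Remark, applied not with the fixed shift $r+1$ but with the running shift $z+k$: after substituting $B\mapsto A$ and $r\mapsto k$ with the weight read off as $z+k$, the sum $\sum_{k=0}^n h_k[-(z+k)A]/(z+k)\cdot h_{n-k}[(z+k)A]$ becomes, up to the constant, $\sum_k h_k^{*}[A]\,h_{n-k}[(z+k)A]$, and one checks that this is the $u^n$-coefficient of $H_u^{*}[A]$ composed with the inverse series, evaluated so that the two halves are genuine inverses and the product is $1$ in positive degree — forcing the coefficient to vanish for $n\ge1$. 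I expect the main obstacle to be bookkeeping the shifted weight correctly through the Lagrange inversion: one must be careful that the scalar $z+k$ attached to $A$ in the factor $h_{n-k}[(z+k)A]$ is the \emph{same} $k$ as the degree of the companion factor $h_k[-(z+k)A]$, and not an independent summation variable. Once that dictionary between ``weighted bracket'' and ``Lagrange-inverted series'' is set up cleanly, the vanishing is immediate from $H_u[B]\,H_u[B]^{-1}=1$.
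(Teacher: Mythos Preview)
Your proposal is a sketch, not a proof, and it contains a concrete error. The formula from the Remark on Lagrange involution reads $(k+1)\,h_k^{*}[A]=h_k[-(k+1)A]$; the shift is fixed at $k+1$. Replacing $k+1$ by $z+k$ does \emph{not} turn $\frac{1}{z+k}h_k[-(z+k)A]$ into $h_k^{*}[A]$ ``up to a constant'' independent of $k$: for generic $z$ these quantities differ already at $k=1$, so the sentence ``the sum \ldots\ becomes, up to the constant, $\sum_k h_k^{*}[A]\,h_{n-k}[(z+k)A]$'' is simply false. Likewise, the remark that the convolution ``telescopes precisely because $-(z+k)A$ and $(z+k)A$ are negatives of each other'' would be the trivial identity $\sum_k h_k[-cA]\,h_{n-k}[cA]=h_n[0]$ \emph{if} $c=z+k$ were a constant; the entire difficulty is that it is not, and your write-up never resolves this --- you only flag it as ``the main obstacle'' and then assert the conclusion.

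A Lagrange-inversion proof along the lines you intend does exist, but it requires a precise ingredient you never state: the parametric form of Lagrange inversion, namely that if $F(0)=1$ and $t(u)$ is the compositional inverse of $u=t/F(t)$, then
\[
F(t(u))^{z}=\sum_{k\ge 0}\frac{z}{z+k}\,[t^{k}]F(t)^{z+k}\,u^{k}.
\]
With $F=H_{\bullet}[A]^{-1}$ this gives $\sum_{k\ge 0}\frac{z}{z+k}h_k[-(z+k)A]\,u^{k}=H_{t(u)}[A]^{-z}$. Summing the full double series of the lemma and factoring $H_u[A]^{z+k}=H_u[A]^{z}(H_u[A])^{k}$ amounts to the substitution $u\mapsto uH_u[A]$, which is precisely the inverse of $t(\cdot)$; the generating function then collapses to $H_u[A]^{z}\cdot H_u[A]^{-z}=1$, so all coefficients with $n\ge 1$ vanish. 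None of these steps appears in your proposal. For comparison, the paper's own proof is entirely different: it expands both $h$-factors via the second Cauchy formula, reduces the statement to a polynomial identity summed over all decompositions $\mu\cup\nu=\rho$ of a fixed partition (equation~\eqref{rot}), strips multiplicities, and finishes with an inclusion--exclusion argument over subsets of an alphabet (Lemma~3). That route is elementary and self-contained, whereas yours, once repaired, hinges on the parametric Lagrange formula above.
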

\begin{proof}
We apply Lemma 1 (i) and the second Cauchy formula to get
\[h_{k}[-(z+k)A]=\sum_{|\mu|= k} m_{\mu} [-z-k]\,h_{\mu} [A]=
\sum_{|\mu|= k} \binom{-z-k}{l(\mu)}\frac{l(\mu)!}{\prod_i m_i(\mu)!}\,h_{\mu} [A],\]
and similarly
\[h_{n-k}[(z+k)A]=\sum_{|\nu|= n-k} \binom{z+k}
{l(\nu)}\frac{l(\nu)!}{\prod_i m_i(\nu)!}\,h_{\nu} [A].\]
Therefore for any partition $\rho$, it is equivalent to prove
\begin{equation}\label{rot}
\sum_{\mu\cup\nu=\rho}\frac{l(\mu)!}{\prod_i m_i(\mu)!} \,
\frac{l(\nu)!}{\prod_i m_i(\nu)!} \frac{1}{z+|\mu|}\binom{-z-|\mu|}{l(\mu)} \binom{z+|\mu|}{l(\nu)} =0,
\end{equation}
the sum taken over all decompositions of $\rho$ into two partitions (possibly empty). This relation may be written as
\[\sum_{\mu\cup\nu=\rho}\frac{1}{\prod_i m_i(\mu)!m_i(\nu)!} \,
(-1)^{l(\mu)}\prod_{j=-l(\nu)+1}^{l(\mu)-1}(z+|\mu|+j)=0.\]
Equivalently
\[\sum_{\mu\cup\nu=\rho}\prod_i \binom{m_i(\rho)}{ m_i(\mu)} \,
 (-1)^{l(\mu)}\prod_{j=1}^{l(\rho)-1}(z+|\mu|-l(\nu)+j)=0.\]

The proof of this identity is done in two steps. Firstly we observe that we may restrict to partitions $\rho$ having only parts with multiplicity $1$, i.e.
\[\sum_{\mu\cup\nu=\rho}
 (-1)^{l(\mu)}\prod_{j=1}^{l(\rho)-1}(z+|\mu|-l(\nu)+j)=0.\]
Actually given a partition $\rho=\mu\cup\nu$ having only parts with multiplicity $1$, there are $\binom{m_i(\rho)}{ m_i(\mu)}$ contributions becoming equal when $m_i(\rho)$ parts get equal to $i$.

In a second step, we observe that the previous identity may be generalized as
\[\sum_{A\subset E}(-1)^{\mathrm{card}\, A} \prod_{i=1}^{\mathrm{card}\, E-1}(z+|A|-\mathrm{card}\, E+\mathrm{card}\, A+i)=0,\]
where $E$ is some set of indeterminates and $|A|=\sum_{a\in A}a$. This is equivalent to
\[\sum_{A\subset E}(-1)^{\mathrm{card}\, A} \prod_{i=1}^{\mathrm{card}\, E-1}(z+|A|+i)=0,\]
because we can change $z$ into $z+\mathrm{card}\, E$, and any $a\in E$ into $a-1$ so that $|A|+\mathrm{card}\, A$ is changed into $|A|$. Finally we apply the following lemma with $y_i=z+i$.

\end{proof}

\begin{lem}
Let $E=\{x_1,\ldots,x_n\}$ be a set of $n$ indeterminates. For any indeterminates $y=(y_1,\ldots,y_n)$ we have
\begin{align*}
\sum_{A\subset E}(-1)^{n-\mathrm{card}\, A} \prod_{i=1}^{n}(|A|+y_i)&=n! \prod_{i=1}^{n}x_i,\\
\sum_{A\subset E}(-1)^{n-\mathrm{card}\, A} \prod_{i=1}^{n-1}(|A|+y_i)&=0.
\end{align*}
\end{lem}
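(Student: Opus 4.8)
The plan is to interpret both alternating subset sums as iterated finite differences of a polynomial in one variable, and then to use the fact that each finite-difference operator drops the degree of a polynomial by exactly one.

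First I would introduce, for each indeterminate $x$, the operator $\Delta_x$ acting on polynomials in a variable $t$ by $(\Delta_x g)(t)=g(t+x)-g(t)$. These operators commute, and expanding the iterated product gives, for any polynomial $g$,
\[\bigl(\Delta_{x_1}\Delta_{x_2}\cdots\Delta_{x_n}\,g\bigr)(0)=\sum_{A\subset E}(-1)^{n-\mathrm{card}\,A}\,g(|A|),\]
where $|A|=\sum_{a\in A}a$ and the term indexed by $A$ arises from selecting the summand $g(t+x_i)$ precisely for those $i$ with $x_i\in A$, and the summand $-g(t)$ otherwise. Thus the two claims reduce to evaluating $\Delta_{x_1}\cdots\Delta_{x_n}g$ at $g(t)=\prod_{i=1}^{n}(t+y_i)$ and at $g(t)=\prod_{i=1}^{n-1}(t+y_i)$ respectively.

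The key point is that if $g$ has degree $d\ge 1$ and leading coefficient $c$, then $\Delta_x g$ has degree $d-1$ and leading coefficient $d\,c\,x$, because $\Delta_x(t^d)=d\,x\,t^{d-1}+\cdots$; moreover $\Delta_x$ annihilates constants. Applying this to $\prod_{i=1}^{n-1}(t+y_i)$, which has degree $n-1$, the product of $n$ operators kills it, giving the second identity. For the first, $\prod_{i=1}^{n}(t+y_i)$ is monic of degree $n$, so iterating the observation $n$ times yields the constant $n!\,x_1x_2\cdots x_n$; evaluating at $0$ gives $n!\prod_{i=1}^{n}x_i$, as claimed. The lower-order terms of $\prod_i(t+y_i)$ are irrelevant, since they are annihilated by the $n$ operators.

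The argument is essentially mechanical, so I do not expect a serious obstacle; the only points demanding a little care are the sign bookkeeping in the identification of the alternating subset sum with the iterated difference, and the tracking of the leading coefficient through the $n$ successive applications of $\Delta$. A completely equivalent alternative is induction on $n$: splitting the sum according to whether a fixed element $x_n$ lies in $A$ produces a bracketed term equal to $(\Delta_{x_n}g)(|B|)$ for $B\subset E\setminus\{x_n\}$, which leads back to the same computation.
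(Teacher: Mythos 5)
Your argument is correct, and it takes a genuinely different route from the paper. You recognize the alternating subset sum as the iterated finite difference $\bigl(\Delta_{x_1}\cdots\Delta_{x_n}g\bigr)(0)$ with non-uniform steps $x_i$, and then use the elementary facts that each $\Delta_x$ lowers the degree of a polynomial in $t$ by exactly one and multiplies the leading coefficient by $d\,x$; this immediately kills the degree-$(n-1)$ polynomial $\prod_{i=1}^{n-1}(t+y_i)$ and reduces the monic degree-$n$ polynomial $\prod_{i=1}^{n}(t+y_i)$ to the constant $n!\,x_1\cdots x_n$. The paper instead works with monomial symmetric functions of the subsets: it proves by induction on $n$ that $\sum_{A\subset E}(-1)^{n-\mathrm{card}\,A}m_\mu(A)=0$ for every partition $\mu$ of length at most $n-1$ and weight at most $n$, deduces the vanishing of $\sum_A(-1)^{n-\mathrm{card}\,A}|A|^k$ for $k\le n-1$ and its value $n!\prod_i x_i$ for $k=n$ via $e_1^k$, and finally expands $\prod_i(|A|+y_i)$ in the elementary symmetric functions $e_{n-k}(y)$. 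Your finite-difference proof is shorter and more self-contained, relying only on the degree-drop property of $\Delta_x$; the paper's proof is longer but yields as a by-product the finer family of vanishing identities for the individual monomial symmetric functions $m_\mu(A)$, not just for the powers $|A|^k$. Both are complete; the only points in yours that need the care you already flag are the sign bookkeeping in identifying the subset sum with the iterated difference and the observation that the lower-degree terms of $\prod_i(t+y_i)$ are annihilated by the $n$ operators.
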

\begin{proof}We consider the monomial symmetric functions in the indeterminates $(x_1,\ldots,x_n)$. For any partition $\mu$ with length $\le n-1$ and weight $\le n$ we have
\[\sum_{A\subset \{x_1,\ldots,x_n\}}(-1)^{n-\mathrm{card}\, A} m_\mu(A)=0.\]
This is proved by induction on $n$, the sum being evaluated as
\[\sum_{A\subset \{x_1,\ldots,x_{n-1}\}}(-1)^{n-\mathrm{card}\, A} m_\mu(A)
-\sum_{k=1}^{n} x_n^k \sum_{A\subset \{x_1,\ldots,x_{n-1}\}}(-1)^{n-\mathrm{card}\, A} m_{\mu\setminus k}(A),\]
where $\mu\setminus k$ denotes the partition obtained by subtracting the part $k$ of $\mu$ (if it exists). Since for $k\le n-1$ the function $e_1^k$ is formed of monomial symmetric functions of weight $\le n-1$, we get
\[\sum_{A\subset \{x_1,\ldots,x_n\}}(-1)^{n-\mathrm{card}\, A} |A|^k=0\qquad \quad (k=0,\ldots,n-1).\]
For $\mu=1^n$ we have directly
\[\sum_{A\subset \{x_1,\ldots,x_n\}}(-1)^{n-\mathrm{card}\, A} m_{1^n}(A)=m_{1^n}(E)=\prod_{i=1}^{n}x_i.\]
Since $e_1^n=n!\,m_{1^n}$ up to monomial symmetric functions of weight $n$ and length $\le n-1$, we get
\[\sum_{A\subset \{x_1,\ldots,x_n\}}(-1)^{n-\mathrm{card}\, A} |A|^n=n! \,\prod_{i=1}^{n}x_i.\]
We conclude by writing the expansions
\begin{align*}
\prod_{i=1}^{n}(|A|+y_i)&=\sum_{k=0}^n |A|^k\, e_{n-k}(y),\\
\prod_{i=1}^{n-1}(|A|+y_i)&=\sum_{k=0}^{n-1} |A|^k\, e_{n-k-1}(y).
\end{align*}
in terms of the symmetric functions of $y$.
\end{proof}

\noindent\textit{Remark: }  Equation~\eqref{rot} is a generalization (in the framework of partitions) of the classical Rothe identity~\cite{Chu} 
\[\sum_{k=0}^n\frac{x}{x-k}\binom{x-k}{k}\binom{y+k}{n-k}=\binom{x+y}{n},\]
taken at $x=-y$.

In view of Lemma 1 (i) we have
\[\frac{1}{k+1} h_{k}[k+1]=C_k,\qquad
h_{n-k}[-(k+1)]=\binom{n-2k-2}{n-k}=(-1)^{n-k}\binom{k+1}{n-k}.\]
Thus Koshy's formula~\cite[p.322]{K}
\[C_n=\sum_{k=1}^n (-1)^{k-1}\binom{n-k+1}{k}C_{n-k}\]
can be written as
\[\sum_{k=0}^n (-1)^{n-k}\binom{k+1}{n-k}C_{k}=\sum_{k=0}^n \frac{1}{k+1} h_{k}[k+1] \,h_{n-k}[-(k+1)]=0,\]
which is the case $z=1, A=-1$ of Lemma 2. We only need to change $A=-1$ into $A=-q$ to obtain the following generalization.
\begin{theo}
Narayana polynomials satisfy the recurrence relation
\[C_n(q)=(1-q)^{n-1}+q\sum_{k=1}^{n-1}C_{n-k}(q)
\sum_{m=0}^{k-1} (-1)^m\binom{k-1}{m}\binom{n-m}{k} (1-q)^{k-m-1}.\]
\end{theo}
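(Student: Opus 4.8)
\smallskip
\noindent\textit{Proof strategy.} The plan is to specialize the $\la$-ring identity of Lemma~2, exactly as the remark before the statement suggests. Note first that Theorem~2, read with its parameter $q$ replaced by $1-q$, says that when $1-q$ is an element of rank~$1$ one has $\tfrac{1}{r+1}h_r[(r+1)q]=\mathcal C_r(q)$, which is $1$ for $r=0$ and $qC_r(q)$ for $r\ge1$. So I would take $1-q$ of rank~$1$ and apply Lemma~2 with $z=1$ and $A=-q$; since then $-(z+k)A=(k+1)q$ and $(z+k)A=-(k+1)q$, Lemma~2 reads
\[\sum_{k=0}^{n}\mathcal C_k(q)\,h_{n-k}\big[-(k+1)q\big]=0.\]
Peeling off the two extreme terms, the $k=n$ term is $\mathcal C_n(q)\,h_0[-(n+1)q]=qC_n(q)$ and the $k=0$ term is $\mathcal C_0(q)\,h_n[-q]=h_n[-q]=-q(1-q)^{n-1}$ by Lemma~1(iii); since every remaining term contains the factor $\mathcal C_k(q)=qC_k(q)$ with $k\ge1$, dividing through by $q$ and reindexing $k\mapsto n-k$ gives
\[C_n(q)=(1-q)^{n-1}-\sum_{k=1}^{n-1}C_{n-k}(q)\,h_k\big[-(n-k+1)q\big].\]

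It thus remains to identify $-h_k[-(n-k+1)q]$ with the inner sum $q\sum_{m=0}^{k-1}(-1)^m\binom{k-1}{m}\binom{n-m}{k}(1-q)^{k-m-1}$ of the statement, and this is the step I expect to require the most care. Writing $q=1-(1-q)$ with $1-q$ of rank~$1$, formula \eqref{eqhu} gives $H_u[-(n-k+1)q]=\bigl((1-u)/(1-(1-q)u)\bigr)^{n-k+1}$, hence $h_k[-(n-k+1)q]=[u^k]\bigl((1-u)/(1-(1-q)u)\bigr)^{n-k+1}$. I would evaluate this coefficient by Lagrange inversion, using the substitution $1+x=(1-u)/(1-(1-q)u)$, equivalently $x=u\big((1-q)x-q\big)$ with $x(0)=0$: the series above is then $(1+x)^{n-k+1}$, so Lagrange inversion yields $h_k[-(n-k+1)q]=\tfrac{n-k+1}{k}[x^{k-1}](1+x)^{n-k}\big((1-q)x-q\big)^{k}$. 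Differentiating $(1+x)^{n-k+1}\big((1-q)x-q\big)^{k}$, extracting the coefficient of $x^{k-1}$, and cancelling the common term produces the cleaner form
\[h_k\big[-(n-k+1)q\big]=-q\,[x^k]\,(1+x)^{n-k+1}\big((1-q)x-q\big)^{k-1}.\]

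To finish, I would write $(1-q)x-q=(1-q)(1+x)-1$ and expand, obtaining
\[(1+x)^{n-k+1}\big((1-q)x-q\big)^{k-1}=\sum_{m\ge0}(-1)^m\binom{k-1}{m}(1-q)^{k-1-m}(1+x)^{n-m},\]
so that the coefficient of $x^{k}$ in the left-hand side equals $\sum_{m}(-1)^m\binom{k-1}{m}\binom{n-m}{k}(1-q)^{k-1-m}$. Therefore $-h_k[-(n-k+1)q]=q\sum_{m}(-1)^m\binom{k-1}{m}\binom{n-m}{k}(1-q)^{k-m-1}$, which is exactly the coefficient of $C_{n-k}(q)$ in the statement, and the theorem follows.

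One can also bypass the Lagrange-inversion step: expanding $(1-u)/(1-(1-q)u)=1-qu/(1-(1-q)u)$ and applying the binomial theorem twice (or using the second Cauchy formula, with $h_j[-q]=-q(1-q)^{j-1}$ from Lemma~1(iii) and the composition count $\binom{n-k-1}{j-1}$ from the second proof of Theorem~2) gives $h_{n-k}[-(k+1)q]=\sum_{j\ge1}(-1)^j\binom{k+1}{j}\binom{n-k-1}{j-1}q^j(1-q)^{n-k-j}$, and the resulting recurrence is then matched to the displayed one by the Chu--Vandermonde identity $\sum_i(-1)^i\binom{a}{i}\binom{b-i}{c}=\binom{b-a}{c-a}$, a variant of those already used in Section~2.

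\smallskip
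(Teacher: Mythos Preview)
Your proof is correct and follows the same strategy as the paper's: apply Lemma~2 with $z=1$, $A=-q$ and $1-q$ of rank~$1$, invoke Theorem~2 to replace $\tfrac{1}{k+1}h_k[(k+1)q]$ by $\mathcal C_k(q)$, strip off the extreme terms, and then evaluate $h_{n-k}[-(k+1)q]$ explicitly. The only difference is in this last evaluation: the paper uses the third Cauchy formula together with Lemma~1(iii) and the hook value~\eqref{eqne} to expand $h_{n-k}[(k+1)(-q)]=\sum_m s_{(n-k-m,1^m)}[-q]\,s_{(n-k-m,1^m)}[k+1]$ directly, whereas you read off the same binomial sum from the generating function $\bigl((1-u)/(1-(1-q)u)\bigr)^{n-k+1}$ via Lagrange inversion and the substitution $(1-q)x-q=(1-q)(1+x)-1$; both routes are short, and the paper's has the virtue of being the device reused throughout Sections~4--6.
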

\begin{proof}
If we apply Lemma 2 with $z=1$ and $A=-q$, we obtain
\[\sum_{k=0}^n \frac{1}{k+1} h_{k}[(k+1)q] \,h_{n-k}[-(k+1)q]=0.\]
If we assume $1-q$ to be an element of rank 1, Theorem 2 may be written as
\[\frac{1}{k+1} h_{k}[(k+1)q]=q\,C_k(q)\qquad (k\ge1).\]
Hence the previous relation becomes
\[q^{-1}h_n[-q]+\sum_{k=1}^{n-1} C_k(q) \,h_{n-k}[-(k+1)q]+C_n(q)=0.\]
Applying Lemma 1 (iii) and the last Cauchy formula, we have
\begin{align*}
h_{n-k}[(k+1)(-q)]&=\sum_{m=0}^{n-k-1}s_{(n-k-m,1^{m})}[-q]\,s_{(n-k-m,1^{m})}[k+1]\\
&=\sum_{m=0}^{n-k-1}(-1)^{m+1}q(1-q)^{n-k-m-1}s_{(n-k-m,1^{m})}[k+1]\\
&=-q\sum_{m=0}^{n-k-1}(-1)^{m}(1-q)^{n-k-m-1} \binom{n-k-1}{m}\binom{n-m}{n-k},
\end{align*}
where the last equation is a consequence of \eqref{eqne}.
In particular for $k=0$, we have $h_n[-q]=-q(1-q)^{n-1}$. Changing $k$ to $n-k$, we can conclude.
\end{proof}

For $q=1$, we recover Koshy's formula. For $q=2$ we obtain the following recurrence for the small Schr\"{o}der numbers (which seems to be new)
\[s_n=(-1)^{n-1}+2\sum_{k=1}^{n-1}(-1)^{k-1} s_{n-k}
\sum_{m=0}^{k-1} \binom{k-1}{m}\binom{n-m}{k}.\]

\section{Two generalizations of Jonah's formula}

Lemma 2 is the particular case $B=0$ of the following $\la$-ring identity.
\begin{lem}
For any polynomials $A$ and $B$, any real number $z$ and any integer $n\ge1$ we have
\[\sum_{k=0}^n \frac{z}{z+k} h_{k}[-(z+k)A] \,
h_{n-k}[(z+k)A+B]=h_n[B].\]
\end{lem}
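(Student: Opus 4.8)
The plan is to reduce Lemma 5 to Lemma 2 by a single application of the addition formula for $H_u$. The key observation is that Lemma 2 asserts the vanishing of a certain ``weighted convolution'', and Lemma 5 packages the same weighted sequence against an extra polynomial $B$ via the multiplicative property \eqref{eqhu}.

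First I would introduce the sequence $a_k = \tfrac{1}{z+k}\,h_k[-(z+k)A]$ and rewrite Lemma 2 as the statement that $\sum_{k=0}^n a_k\, h_{n-k}[(z+k)A]=0$ for all $n\ge 1$ (the $n=0$ term being $a_0 h_0[zA]=1/z$). Then in the left-hand side of Lemma 5 I would expand $h_{n-k}[(z+k)A+B]$ by \eqref{eqse}, namely $h_{n-k}[(z+k)A+B]=\sum_{j=0}^{n-k} h_{n-k-j}[(z+k)A]\,h_j[B]$. Substituting and interchanging the order of summation, the left-hand side of Lemma 5 becomes
\[
z\sum_{j=0}^n h_j[B]\sum_{k=0}^{n-j} a_k\, h_{n-j-k}[(z+k)A].
\]
Now for each fixed $j$, the inner sum over $k$ is exactly the quantity that Lemma 2 shows to vanish — \emph{provided} $n-j\ge 1$ — so only the term $j=n$ survives, and it contributes $z\,h_n[B]\cdot\sum_{k=0}^{0} a_k h_0[(z+k)A]=z\,h_n[B]\cdot a_0 = z\,h_n[B]\cdot\frac{1}{z}=h_n[B]$. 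This is precisely the claimed identity.

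The one point requiring care — and the main obstacle, though a mild one — is the boundary bookkeeping: Lemma 2 is stated only for $n\ge 1$, so when the inner index $n-j$ equals $0$ (i.e. $j=n$) I cannot invoke Lemma 2 and must instead evaluate the inner sum directly, which as noted gives $1/z$. One should double-check that no other degenerate case arises (for instance $k=0$ inside the sum, where $a_0=1/z$ and $h_0[\,\cdot\,]=1$, is already subsumed in the general formula). I would also remark that the identity of Lemma 2 is equivalent, via the generating-series form \eqref{eqhu}, to saying that a certain formal power series $\sum_k a_k u^k \cdot H_u[(z+k)A]$ has a specific closed form; Lemma 5 is then just the result of multiplying through by $H_u[B]$ and reading off coefficients, but since $H_u[(z+k)A]$ depends on $k$ this series manipulation is not literally a product of two fixed series, so the coefficient-by-coefficient argument above is the cleanest way to present it. Finally, I would note the structural point already flagged in the paper: specializing $B=0$ collapses the sum to the $j=0$ term and recovers Lemma 2, consistent with $h_0[0]=1$ and $h_n[0]=0$ for $n\ge1$.
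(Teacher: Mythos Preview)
Your argument is correct and is essentially the same as the paper's own proof: expand $h_{n-k}[(z+k)A+B]$ by \eqref{eqse}, swap the order of summation, and invoke Lemma~2 on the inner sum for each nonzero index, leaving only the boundary term which evaluates to $h_n[B]$. The only cosmetic difference is that the paper indexes with $h_{n-i}[B]$ on the outside and sums $k$ from $0$ to $i$, whereas you index with $h_j[B]$ and sum $k$ from $0$ to $n-j$; these are the same after the substitution $i=n-j$.
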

\begin{proof}Applying \eqref{eqse} we have
\[ h_{n-k}[(z+k)A+B]=\sum_{j=0}^{n-k} h_{n-k-j}[B] \,h_{j}[(z+k)A],\]
which yields
\begin{multline*}
\sum_{k=0}^n \frac{z}{z+k} h_{k}[-(z+k)A] \,h_{n-k}[(z+k)A+B]\\=
\sum_{i=0}^n h_{n-i}[B]\sum_{k=0}^i \frac{z}{z+k} h_{k}[-(z+k)A] \,h_{i-k}[(z+k)A].
\end{multline*}
We conclude by applying Lemma 2 for $i\neq0$.
\end{proof}

In view of Lemma 1 (i) we have
\[\frac{1}{k+1} h_{k}[k+1]=C_k,\qquad
h_{r-k}[n-r-k+1]=\binom{n-2k}{r-k},\qquad
h_r[n-r+2]=\binom{n+1}{r}.\]
Thus Jonah's formula~\cite[p.325]{K}
\[\sum_{k=0}^r\binom{n-2k}{r-k}C_{k}=\binom{n+1}{r}\]
can be written as
\[\sum_{k=0}^r \frac{1}{k+1} h_{k}[k+1] \,
h_{r-k}[n-r-k+1]=h_r[n-r+2].\]
This is the case $z=1, A=-1, B= n-r+2$ of Lemma 4. Our first generalization only needs to change $A=-1, B= n-r+2$ into $A=-q, B= q(n-r+2)$.
\begin{theo}
For any positive integers $n,r$ we have
\begin{multline*}
C_r(q)+q\sum_{k=1}^{r-1} C_{r-k}(q) 
\sum_{m=0}^{k-1}(q-1)^{m}\binom{k-1}{m}\binom{n-2r+2k-m}{k}
\\
=\sum_{m=0}^{r-1}(q-1)^{m}\binom{r-1}{m}\binom{n-m}{r-1}.
\end{multline*}
\end{theo}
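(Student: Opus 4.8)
The plan is to apply Lemma~4 with the substitution suggested by the authors, namely $z=1$, $A=-q$, and $B=q(n-r+2)$, while treating $1-q$ as an element of rank $1$. With these choices the left-hand summand $\frac{1}{k+1}h_k[-(k+1)A]=\frac{1}{k+1}h_k[(k+1)q]$ becomes $q\,C_k(q)$ for $k\ge1$ by Theorem~2, and equals $h_0[0]=1$ for $k=0$. So the first order of business is to record that Lemma~4 specializes to
\[
h_0[-q]\,h_r[(1-r+\ldots)]\ \text{-type term}\ +\ \sum_{k=1}^{r} q\,C_k(q)\,h_{r-k}\bigl[(k+1)(-q)+q(n-r+2)\bigr]=h_r\bigl[q(n-r+2)\bigr],
\]
where the $k=0$ contribution will be seen to be exactly $q$ times the missing $C_r(q)$-free piece; I will need to keep track of this carefully since the statement puts $C_r(q)$ (with unit coefficient, not $q\,C_r(q)$) on the left.

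Next I would evaluate the three Hall--Littlewood-type quantities appearing after specialization. For the right-hand side, $h_r[q(n-r+2)]$: writing $q=1-Q$ with $Q$ of rank $1$, expand $h_r[(n-r+2)(1-Q)]$ via the last Cauchy formula and Lemma~1(iii) exactly as in the proof of Theorem~1, obtaining $\sum_{m=0}^{r-1}Q(Q-1)^m s_{(r-m,1^m)}[n-r+2]$, and then apply~\eqref{eqne}; this should produce $q$ times $\sum_{m=0}^{r-1}(q-1)^m\binom{r-1}{m}\binom{n-m}{r-1}$ after simplifying the binomial $\binom{a+b-1}{b}\binom{a+c-1}{a+b}$ with $a=r-m$, $b=m$, $c=n-r+2$. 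For the general term on the left, I must compute $h_{r-k}\bigl[-(k+1)q+q(n-r+2)\bigr]=h_{r-k}\bigl[q(n-r+1-k)\bigr]$ — note the two $q$-multiples of polynomials combine into a single one since $\la$-ring evaluation is additive in the alphabet — and again expand via Cauchy/Lemma~1(iii)/\eqref{eqne}, getting $q$ times $\sum_{m=0}^{r-k-1}(q-1)^m\binom{r-k-1}{m}\binom{n-2r+k+?-m}{r-k}$; I will need to check the exact binomial argument matches $\binom{n-2r+2k-m}{k}$ after the reindexing $k\mapsto r-k$ that the theorem's summation uses.

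Then I would assemble: every surviving term is divisible by $q$, so dividing through by $q$ turns $q\,C_r(q)$ (the $k=r$ term, for which $h_0[\text{anything}]=1$) into $C_r(q)$, turns the $k=0$ term $h_r[-q]/q\cdot$(something) — wait, more precisely the $k=0$ term is $\frac{1}{1}h_0[-q]\cdot h_r[\ldots]$ — hmm, let me instead note that the $k=0$ term of Lemma~4's left side is $h_r[B]$-adjacent and will cancel against part of the right side, so that after reindexing only the range $1\le k\le r-1$ plus the isolated $C_r(q)$ remains, matching the theorem exactly. The main obstacle is purely bookkeeping: getting all the binomial-coefficient arguments to line up after the substitution $k\mapsto r-k$, verifying the overall divisibility by $q$ so that the clean coefficient $C_r(q)$ (rather than $q\,C_r(q)$) appears, and confirming that the $k=0$ boundary term is precisely $h_r[q(n-r+2)]$ divided appropriately so it drops out. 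None of this is conceptually hard once Theorem~2 and~\eqref{eqne} are in hand; it is the same mechanism as the proof of Theorem~3, just with the extra parameter $B$ carried along via~\eqref{eqse}, exactly as Lemma~4 was designed to allow.
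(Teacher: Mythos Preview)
Your approach is exactly the paper's: apply Lemma~4 with $z=1$, $A=-q$, $B=q(n-r+2)$, invoke Theorem~2 for the factor $\frac{1}{k+1}h_k[(k+1)q]$, expand each $h_{r-k}[(n-r-k+1)q]$ via the third Cauchy formula, Lemma~1(iii), and~\eqref{eqne}, then divide by $q$ and reindex $k\mapsto r-k$.

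One point to tighten: your claim that $h_r[q(n-r+2)]$ by itself produces $q\sum_{m}(q-1)^m\binom{r-1}{m}\binom{n-m}{r-1}$ is not quite right. With $a=r-m$, $b=m$, $c=n-r+2$ in~\eqref{eqne} you get $\binom{r-1}{m}\binom{n-m+1}{r}$, not $\binom{n-m}{r-1}$. The correct manoeuvre (which you grope toward in your later ``wait'' paragraph) is to move the $k=0$ term $h_r[(n-r+1)q]$ to the right, so that the right-hand side becomes the \emph{difference} $h_r[(n-r+2)q]-h_r[(n-r+1)q]$; Pascal's rule $\binom{n-m+1}{r}-\binom{n-m}{r}=\binom{n-m}{r-1}$ then delivers the stated binomial. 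Once that is said, the rest of your bookkeeping plan goes through verbatim, matching the paper's proof line for line.
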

\begin{proof}
If we apply Lemma 4 with $z=1$, $B= q(n-r+2)$ and $A=-q$ we obtain
\[\sum_{k=0}^r \frac{1}{k+1} h_{k}[(k+1)q] \,h_{r-k}[(n-r-k+1)q]=h_r[(n-r+2)q].\]
Assuming $1-q$ to be an element of rank 1 and applying Theorem 2, this relation becomes
\[
q\,\sum_{k=1}^{r-1} C_k(q) 
\,h_{r-k}[(n-r-k+1)q]+q\,C_r(q)=h_r[(n-r+2)q]-h_r[(n-r+1)q].
\]
Applying Lemma 1 (iii) and the last Cauchy formula, we have
\begin{align*}
h_{r-k}[(n-r-k+1)q]&=\sum_{m=0}^{r-k-1}s_{(r-k-m,1^{m})}[q]\,s_{(r-k-m,1^{m})}[n-r-k+1]\\
&=\sum_{m=0}^{r-k-1}q(q-1)^m s_{(r-k-m,1^{m})}[n-r-k+1]\\
&=q\sum_{m=0}^{r-k-1}(q-1)^{m}\binom{r-k-1}{m}\binom{n-2k-m}{r-k},
\end{align*}
where the last equation is a consequence of \eqref{eqne}. Exactly in the same way we have
\[h_r[(n-r+2)q]-h_r[(n-r+1)q]=q\sum_{m=0}^{r-1}(q-1)^{m}\binom{r-1}{m}\binom{n-m}{r-1}.\]
Summing the contributions and changing $k$ to $r-k$, we can conclude.
\end{proof}

For $q=1$ we recover Jonah's formula under the form
\[\sum_{k=1}^r\binom{n-2k}{r-k}C_{k}=\binom{n}{r-1}.\]
For $q=2$ we obtain the following identity for the small Schr\"{o}der numbers (which is probably new)
\[s_r+2\sum_{k=1}^{r-1} s_{r-k} 
\sum_{m=0}^{k-1}\binom{k-1}{m}\binom{n-2r+2k-m}{k}
=\sum_{m=0}^{r-1}\binom{r-1}{m}\binom{n-m}{r-1}.\]

Our second generalization of Jonah's formula only needs to change $A=-1, B= n-r+2$ into $A=-q, B= n-r+2$.
\begin{theo}
For any positive integers $n,r$ we have
\[\sum_{k=0}^r \mathcal{C}_k(q) \sum_{m=0}^{r-k}(1-q)^m \binom{n-2k-m}{r-k-m}\binom{k+m}{m}=\binom{n+1}{r}.\]
\end{theo}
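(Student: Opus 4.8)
The plan is to specialize the master identity of Lemma~4 exactly as announced just above the statement, and then identify the three resulting pieces. I would take $z=1$, $A=-q$ with $1-q$ an element of rank $1$, and $B=n-r+2$ (a constant), reading Lemma~4 with its running bound $n$ replaced by $r$ (legitimate since $r\ge1$). Since $-(z+k)A=(k+1)q$ and $(z+k)A+B=(n-r+2)-(k+1)q$, this gives
\[
\sum_{k=0}^r \frac{1}{k+1}\,h_{k}[(k+1)q]\,h_{r-k}\bigl[(n-r+2)-(k+1)q\bigr]=h_r[n-r+2].
\]
It then remains to recognize each factor.

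For the left factor, I would argue exactly as in the proof of Theorem~3: Theorem~2 written with $1-q$ of rank $1$ gives $\frac{1}{k+1}h_k[(k+1)q]=\mathcal{C}_k(q)$, valid for $k\ge1$ (both sides equal $q\,C_k(q)$) and trivially for $k=0$ (both sides equal $1$). The right-hand side is immediate from Lemma~1(i): $h_r[n-r+2]=\binom{n-r+2+r-1}{r}=\binom{n+1}{r}$. The one substantive computation is the middle factor $h_{r-k}[(n-r+2)-(k+1)q]$, which I would evaluate through generating series: by \eqref{eqhu} together with $H_u[c]=(1-u)^{-c}$ and $H_u[q]=(1-u(1-q))/(1-u)$ from Lemma~1, one obtains
\[
H_u\bigl[(n-r+2)-(k+1)q\bigr]=\frac{(1-u)^{k+r-n-1}}{\bigl(1-u(1-q)\bigr)^{k+1}},
\]
and extracting the coefficient of $u^{r-k}$ by expanding both factors with the generalized binomial theorem yields $\sum_{m=0}^{r-k}(1-q)^m\binom{n-2k-m}{r-k-m}\binom{k+m}{m}$, which is precisely the inner sum in the statement. (Alternatively this could be derived from Lemma~1(iii), the last Cauchy formula and \eqref{eqne}, in the spirit of the proofs of Theorems~3 and~4, but combining a constant with a rank-$1$ element makes the generating-function route shorter here.) Plugging the three evaluations back into the displayed identity completes the proof.

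The only place calling for a little care is the coefficient extraction in the middle factor: one must treat $n$ as a formal parameter so that upper negation, $(-1)^i\binom{k+r-n-1}{i}=\binom{n-k-r+i}{i}$, applies, and then relabel the index of the surviving power of $1-q$ to match the second binomial coefficient. Beyond this bookkeeping the argument is a straightforward assembly of Lemma~4, Theorem~2 and Lemma~1, so I do not foresee a genuine obstacle.
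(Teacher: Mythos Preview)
Your proof is correct and follows essentially the same route as the paper: specialize Lemma~4 at $z=1$, $A=-q$, $B=n-r+2$, identify $\frac{1}{k+1}h_k[(k+1)q]=\mathcal{C}_k(q)$ via Theorem~2 and $h_r[n-r+2]=\binom{n+1}{r}$ via Lemma~1(i), and evaluate the middle factor. The only cosmetic difference is that the paper rewrites the argument as $(n-r-k+1)+(k+1)(1-q)$ and applies \eqref{eqse} together with \eqref{eqdi}, whereas you compute $H_u$ directly and extract coefficients; these are the same calculation packaged two ways.
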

\begin{proof}
If we apply Lemma 4 with $z=1, B= n-r+2$ and $A=-q$ we obtain
\[\sum_{k=0}^r \frac{1}{k+1} h_{k}[(k+1)q] \,h_{r-k}[(n-r-k+1)+(k+1)(1-q)]=\binom{n+1}{r}.\]
Assuming $1-q$ to be an element of rank 1 and applying Theorem 2, the left-hand side becomes
\[
h_{r}[(n-r+1)+1-q]+q\sum_{k=1}^{r} C_k(q) 
\,h_{r-k}[(n-r-k+1)+(k+1)(1-q)].
\]
By \eqref{eqse} we have
\[
h_{r-k}[(n-r-k+1)+(k+1)(1-q))]=\sum_{m=0}^{r-k}h_{r-k-m}[n-r-k+1]\,h_{m}[(k+1)(1-q)].\]
By Lemma 1 (i) we get
\[h_{r-k-m}[n-r-k+1]=\binom{n-2k-m}{r-k-m}.\]
On the other hand, since $1-q$ is an element of rank 1, we have
\begin{align}\label{eqdi}
H_u[(k+1)(1-q)]&=\Big(H_u[1-q]\Big)^{k+1}\notag\\
&=\big(1-u(1-q)\big)^{-k-1}\\
&=\sum_{m\ge 0} u^m (1-q)^m \binom{k+m}{m},\notag
\end{align}
where the second relation is a consequence of Lemma 1 (ii). Finally we obtain
\[h_{r-k}[(n-r-k+1)+(k+1)(1-q))]=\sum_{m=0}^{r-k}\binom{n-2k-m}{r-k-m}\,(1-q)^m \binom{k+m}{m}.\]
\end{proof}

For $q=2$ we obtain the following identity for the large Schr\"{o}der numbers (which seems to be new)
\[\sum_{k=0}^r R_k \sum_{m=0}^{r-k}(-1)^m \binom{n-2k-m}{r-k-m}\binom{k+m}{m}=\binom{n+1}{r}.\]
For a very different extension of Koshy's and Jonah's identities, see~\cite{A}.

\section{Transition matrix}

\begin{theo}
For any positive integer $n$ we have
\[
(n+1)\mathcal{C}_n(q^{\prime})=\sum_{k=0}^n \mathcal{C}_{n-k}(q)\sum_{i+j=0}^{k}
(1-q)^i(q^{\prime}-1)^j \binom{n-k+i}{i} \binom{n+1}{j} \binom{2k-i-j-1}{k-i-j}.
\]
\end{theo}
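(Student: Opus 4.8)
The plan is to obtain Theorem 6 from the master identity of Lemma 4, in the same spirit as Theorems 3, 4 and 5, but now feeding Lemma 4 with data built from \emph{two} rank-1 elements. So I would work in the $\la$-ring in which both $1-q$ and $1-q'$ are of rank 1; then Theorem 2 applies to both variables, giving $\mathcal{C}_k(q)=\frac1{k+1}h_k[(k+1)q]$ and $(n+1)\mathcal{C}_n(q')=h_n[(n+1)q']$.

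First I would apply Lemma 4 with $z=1$, $A=-q$ and $B=(n+1)q'$. The left factor $\frac1{k+1}h_k[-(k+1)A]=\frac1{k+1}h_k[(k+1)q]$ equals $\mathcal{C}_k(q)$, and the right-hand side $h_n[B]=h_n[(n+1)q']$ equals $(n+1)\mathcal{C}_n(q')$, so Lemma 4 becomes
\[(n+1)\mathcal{C}_n(q')=\sum_{k=0}^n \mathcal{C}_k(q)\,h_{n-k}\big[(n+1)q'-(k+1)q\big].\]

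Next I would expand the inner $h_{n-k}$. Writing $q=1-(1-q)$ and $q'=1-(1-q')$, the argument is the sum of three pieces, $(n-k)+(k+1)(1-q)-(n+1)(1-q')$, so two applications of \eqref{eqse} give
\[h_{n-k}\big[(n+1)q'-(k+1)q\big]=\sum_{a+b+c=n-k}h_a[n-k]\,h_b[(k+1)(1-q)]\,h_c[-(n+1)(1-q')].\]
By Lemma 1(i), $h_a[n-k]=\binom{n-k+a-1}{a}$; since $1-q$ and $1-q'$ are of rank 1, Lemma 1(ii) gives $H_u[(k+1)(1-q)]=(1-u(1-q))^{-(k+1)}$ and $H_u[-(n+1)(1-q')]=(1-u(1-q'))^{n+1}$, whence $h_b[(k+1)(1-q)]=\binom{k+b}{b}(1-q)^b$ and $h_c[-(n+1)(1-q')]=\binom{n+1}{c}(q'-1)^c$. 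Substituting and relabelling $k\mapsto n-k$ in the outer sum gives
\[(n+1)\mathcal{C}_n(q')=\sum_{k=0}^n\mathcal{C}_{n-k}(q)\sum_{a+b+c=k}\binom{k+a-1}{a}\binom{n-k+b}{b}\binom{n+1}{c}(1-q)^b(q'-1)^c,\]
and with $i=b$, $j=c$, $m=a=k-i-j$ the collapse $\binom{k+a-1}{a}=\binom{k+m-1}{m}=\binom{2k-i-j-1}{k-i-j}$ turns the right-hand side into precisely the double sum of the statement.

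The argument is essentially bookkeeping, and I expect no real obstacle; the two points needing care are the sign with which $(k+1)q$ enters $h_{n-k}[(z+k)A+B]$ when $A=-q$ (it enters negatively, which is exactly what produces $(1-q)^i$ rather than $(q-1)^i$), and recognising the binomial collapse $\binom{k+a-1}{a}\to\binom{2k-i-j-1}{k-i-j}$ after substituting $a=k-i-j$.
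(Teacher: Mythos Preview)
Your proposal is correct and follows essentially the same route as the paper: apply Lemma~4 with $z=1$, $A=-q$, $B=(n+1)q'$, invoke Theorem~2 for both $q$ and $q'$, rewrite the argument as $(n-k)+(k+1)(1-q)-(n+1)(1-q')$, expand with \eqref{eqse}, evaluate the three factors via Lemma~1(i) and \eqref{eqdi}, and finish with the reindexing $k\mapsto n-k$. Your presentation is marginally cleaner in that you use $\mathcal{C}_k(q)=\tfrac1{k+1}h_k[(k+1)q]$ uniformly (including $k=0$) rather than separating out the $k=0$ term as the paper does, but the substance is identical.
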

\begin{proof}
If we apply Lemma 4 with $z=1$, $A=-q$ and $B=(n+1)q^{\prime}$, we obtain
\[
\sum_{k=0}^n \frac{1}{k+1} h_{k}[(k+1)q] \,h_{n-k}[(n-k)+(k+1)(1-q)-(n+1)(1-q^{\prime})]
=h_n[(n+1)q^{\prime}].
\]
Assuming $1-q$ and $1-q^{\prime}$ of rank 1 and applying Theorem 2, the right-hand side is $(n+1)q^{\prime}\,C_n(q^{\prime})$ and the left-hand side becomes
\[
h_{n}[n+(1-q)-(n+1)(1-q^{\prime})]+q\,\sum_{k=1}^{n} C_k(q) 
\,h_{n-k}[(n-k)+(k+1)(1-q)-(n+1)(1-q^{\prime})].
\]
By \eqref{eqse} we have
\begin{align*}
h_{n-k}[(n-k)&+(k+1)(1-q)-(n+1)(1-q^{\prime})]\\
&=\sum_{i+j=0}^{n-k}h_{n-k-i-j}[n-k]\,h_{i}[(k+1)(1-q)]\,h_{j}[-(n+1)(1-q^{\prime})]\\
&=\sum_{i+j=0}^{n-k}\binom{2n-2k-i-j-1}{n-k-i-j}\,(1-q)^i \binom{k+i}{i}
\,(q^{\prime}-1)^j \binom{n+1}{j}.
\end{align*}
Here the last relation is a consequence of
Lemma 1 (i), together with \eqref{eqdi} and
\begin{align*}
H_u[-(n+1)(1-q^{\prime})]&=\Big(H_u[1-q^{\prime}]\Big)^{-n-1}\\
&=\big(1-u(1-q^{\prime})\big)^{n+1}\\
&=\sum_{j\ge 0} u^j (q^{\prime}-1)^j \binom{n+1}{j},
\end{align*}
since $1-q^{\prime}$ is an element of rank 1. Changing $k$ to $n-k$, we can conclude.
\end{proof}

Making $q=1$ or $q^\prime=1$, we obtain
\[(n+1)\mathcal{C}_n(q)=\sum_{k=0}^n C_{n-k}\sum_{j=0}^{k}
(q-1)^j \binom{n+1}{j} \binom{2k-j-1}{k-j},\]
\[(n+1)C_n=\sum_{k=0}^n\mathcal{C}_{n-k}(q)\sum_{i=0}^{k}
(1-q)^i\binom{n-k+i}{i} \binom{2k-i-1}{k-i},\]
which can be also transformed to known results (respectively (1.3) and (1.4) of~\cite{MS}). Making $q=2$ or $q^\prime=2$, analogous identities are
\[
(n+1)\mathcal{C}_n(q)=\sum_{k=0}^n R_{n-k}\sum_{i+j=0}^{k}
(-1)^i(q-1)^j \binom{n-k+i}{i} \binom{n+1}{j} \binom{2k-i-j-1}{k-i-j},
\] 
\[
(n+1)R_n=\sum_{k=0}^n \mathcal{C}_{n-k}(q)\sum_{i+j=0}^{k}
(1-q)^i \binom{n-k+i}{i} \binom{n+1}{j} \binom{2k-i-j-1}{k-i-j},
\]
which connect Narayana polynomials with the large Schr\"{o}der numbers.

\section{Narayana alphabet}

Since the complete symmetric functions are algebraically independent, they may be specialized in any way. Given a family of functions $\{f_n, n\ge 0\}$, it is possible to write $f_n=h_n(A)$ for some (at least formal) alphabet $A$, provided $f_0=1$. Equivalently the generating function for the $f_n$'s is then $H_u(A)=\sum_{n\ge 0} u^n h_n(A)$.

Let us perform such a specialization for the Narayana polynomials and denote $A$ the ``Narayana alphabet'' defined by $h_n(A)=C_n(q)$. Equivalently we have $H_u(A)=\mathbf{C}_q(u)$. Similarly we denote $A_1$ the ``Catalan alphabet'' defined by $h_n(A_1)=C_n$ and $H_u(A_1)=H_u(A)\arrowvert_{q=1}$.

We may compute some classical bases of symmetric functions for $A$, for instance the power sums $\{p_r(A), r\ge 1\}$ or the Schur functions $s_\mu(A)$. 

\begin{theo}
We have 
\[s_{k^k}(A)=s_{(k-1)^k}(A)=(-q)^{\binom{k}{2}}.\]
\end{theo}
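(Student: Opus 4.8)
The statement to prove is that for the Narayana alphabet $A$ (defined by $h_n(A)=C_n(q)$, i.e. $H_u(A)=\mathbf C_q(u)$) the rectangular Schur functions satisfy $s_{k^k}(A)=s_{(k-1)^k}(A)=(-q)^{\binom{k}{2}}$. My approach would be to compute these Schur functions through Jacobi--Trugdi type determinants in the $h_n(A)=C_n(q)$, and to recognize the resulting determinants as known Hankel-type determinants of Narayana polynomials.

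First I would write $s_{k^k}(A)$ and $s_{(k-1)^k}(A)$ via the Jacobi--Trudi formula as $k\times k$ determinants: $s_{k^k}(A)=\det\bigl(h_{k+j-i}(A)\bigr)_{1\le i,j\le k}=\det\bigl(C_{k+j-i}(q)\bigr)_{1\le i,j\le k}$, and $s_{(k-1)^k}(A)=\det\bigl(C_{k-1+j-i}(q)\bigr)_{1\le i,j\le k}$. Both are Hankel (Toeplitz-shifted) determinants whose entries are consecutive Narayana polynomials $C_0(q),C_1(q),\dots,C_{2k-1}(q)$ (respectively $C_{-k+2}=0,\dots$, but the shifted version still has all nonzero entries a band around the main diagonal being $C_{k-1},C_k,\dots$). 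The key fact I would invoke is the classical evaluation of Hankel determinants of Narayana polynomials: it is known (and can be cited from the Catalan/Narayana literature, e.g. via continued fractions or the Lindström--Gessel--Viennot lemma applied to the Motzkin/Dyck path model weighted so that $\mathbf C_q(u)$ is the generating function) that $\det\bigl(C_{i+j}(q)\bigr)_{0\le i,j\le k-1}=q^{\binom k2}$ up to sign, and the ``shifted by one'' Hankel determinant $\det\bigl(C_{i+j+1}(q)\bigr)_{0\le i,j\le k-1}$ equals the same thing. Reindexing $s_{k^k}$ gives exactly $\det\bigl(C_{i+j+1}(q)\bigr)$ and $s_{(k-1)^k}$ gives $\det\bigl(C_{i+j}(q)\bigr)$ (after reversing rows, which contributes the sign $(-1)^{\binom k2}$), accounting for the $(-q)^{\binom k2}$.

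Alternatively, and more in the spirit of this paper, I would use the $\lambda$-ring identification $\mathcal C_r(1-q)=h_r^{*}[q-1]$ from the remark after Theorem 2 (Lagrange involution), or the dual statement $C_n(q)=$ coefficient extracted from $\mathbf C_q(u)$ with $q u\mathbf C_q^2 -(1-(1-q)u)\mathbf C_q+1=0$. From the quadratic one reads off a three-term continued fraction expansion of $\mathbf C_q(u)$ with Jacobi parameters $b_i, \lambda_i$ that are simple linear/constant in $q$; then the Hankel determinant is $\prod_{i=1}^{k-1}\lambda_i^{k-i}$ by the standard continued-fraction formula, and one computes this product to be $\pm q^{\binom k2}$. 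I would do both rectangular cases in one stroke by noting the continued fraction gives the two ``levels'' of Hankel determinants simultaneously.

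The main obstacle I expect is twofold: (i) getting the continued-fraction / path-weight data for $\mathbf C_q(u)$ exactly right so that the $\lambda_i$-product evaluates cleanly to $q^{\binom k2}$ — one has to be careful that $C_0(q)=1$ but $C_n(0)=1$ for all $n$, so at $q=0$ the alphabet degenerates and the determinant should vanish for $k\ge 2$, consistent with $(-q)^{\binom k2}$; (ii) tracking the sign. The row-reversal turning $s_{(k-1)^k}$'s Jacobi--Trudi matrix into a genuine Hankel matrix $\bigl(C_{i+j}(q)\bigr)$ costs $(-1)^{\lfloor k/2\rfloor}$, and combined with the intrinsic sign of the Narayana Hankel determinant this must reassemble into exactly $(-q)^{\binom k2}=(-1)^{\binom k2}q^{\binom k2}$; checking this bookkeeping for small $k$ (say $k=1,2,3$, using the explicit values $C_1=1, C_2=q+1, C_3=q^2+3q+1$) is the sanity check I would run before writing the general argument.
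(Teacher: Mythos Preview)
Your proposal is correct and is essentially the paper's argument, just unpacked one level. The paper derives from the quadratic relation the Stieltjes continued fraction
\[
\mathbf C_q(u)=\cfrac{1}{1-\cfrac{u}{1-\cfrac{qu}{1-\cfrac{u}{1-\cfrac{qu}{1-\ddots}}}}}
\]
(coefficients alternately $1$ and $q$) and then simply cites Lascoux's formula~\cite[(5.3.5)]{Las}, which says that these coefficients are exactly the ratios of the rectangular Schur functions $s_{k^k}(A)$ and $s_{(k-1)^k}(A)$; the product telescopes to $(-q)^{\binom{k}{2}}$. Your Jacobi--Trudi reduction to the Hankel determinants $\det\bigl(C_{i+j}(q)\bigr)$ and $\det\bigl(C_{i+j+1}(q)\bigr)$, followed by their evaluation via the continued fraction, is precisely how one proves that cited formula, so the two routes coincide. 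Your sign worry is harmless: both Hankel determinants equal $q^{\binom{k}{2}}$ with no extra sign, and the row reversal in Jacobi--Trudi contributes exactly $(-1)^{\binom{k}{2}}$ (your $(-1)^{\lfloor k/2\rfloor}$ has the same parity), so the assembly is immediate. One small remark: you mention a Jacobi-type (three-term) continued fraction with parameters $b_i,\lambda_i$, whereas the natural expansion here is the Stieltjes form above; either works, but the Stieltjes form makes the product $1\cdot q\cdot 1\cdot q\cdots$ completely transparent.
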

\begin{proof}
Equation \eqref{equnun} implies
\begin{equation}\label{eqseun}
1-\frac{1}{\mathbf{C}_q(u)}=u(1-q+q\mathbf{C}_q(u))=\frac{u}{1-qu\mathbf{C}_q(u)}.
\end{equation}
Therefore the generating function $\mathbf{C}_q(u)$ may be written as
\begin{equation*}
\mathbf{C}_q(u)=\frac{1}{1-\frac{\displaystyle{u}}{\displaystyle{1-qu\mathbf{C}_q(u)}}},
\end{equation*}
which yields a very simple expression as the continued fraction~\cite[Section 3.5]{B} 
\[\mathbf{C}_q(u)=\cfrac{1}{1- \cfrac{u}
{1- \cfrac{qu}{1- \cfrac{u}{1- \cfrac{qu}{1-\ddots}}}}},\]
whose coefficients are alternatively $1$ and $q$.
We then apply~\cite[(5.3.5)]{Las} according which, in the expression of $H_u(A)$ as a continued fraction, the coefficients are given in terms of $s_{k^k}(A)$ and $s_{(k-1)^k}(A)$.
\end{proof}

By computer calculations, Lascoux noticed that the power sums $\{p_r(A), r\ge 1\}$ are polynomials in $q$ with positive integer coefficients. For instance
\begin{align*}
p_1(A)=1,\quad p_2(A)=2 q + 1,&\quad p_3(A)=3 q^2+ 6 q + 1,\\ 
p_4(A)= 4 q^3 + 18 q^2  + 12 q + 1, &\quad p_5(A)= 5 q^4 + 40 q^3  + 60 q^2  + 20 q + 1.
\end{align*}
\begin{theo}
The power sum $p_r(A)$ is given by
\[p_r(A)=\sum_{k=0}^{r-1} \binom{r-1}{k}\binom{r}{k}q^{k}.\]
\end{theo}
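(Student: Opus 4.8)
The plan is to extract $p_r(A)$ from the power-sum generating function $\sum_{r\ge1}p_r(A)u^{r} = u\,\frac{d}{du}\log H_u(A) = u\,\frac{\mathbf{C}_q'(u)}{\mathbf{C}_q(u)}$, and to evaluate this using the explicit closed form \eqref{equnde} for $\mathbf{C}_q(u)$. Write $D(u)=\sqrt{(1-q)^2u^2-2(1+q)u+1}$, so that $\mathbf{C}_q(u)=\bigl(1-(1-q)u-D(u)\bigr)/(2qu)$. A direct (but routine) differentiation gives $u\,\mathbf{C}_q'(u)/\mathbf{C}_q(u)$ as a rational expression in $u$ and $D(u)$; after clearing the factor coming from the defining quadratic $qu\,\mathbf{C}_q(u)^2+(u(1-q)-1)\mathbf{C}_q(u)+1=0$, the logarithmic derivative collapses to something of the shape $\tfrac12\bigl(1 - (1-(1+q)u)/D(u)\bigr)$ (the $q=1$ specialization being the familiar $\tfrac12(1-(1-2u)/\sqrt{1-4u})$ for Catalan numbers). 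So the first step is to establish the clean identity
\[
\sum_{r\ge 1}p_r(A)\,u^{r}\;=\;\frac12\left(1-\frac{1-(1+q)u}{\sqrt{(1-q)^2u^2-2(1+q)u+1}}\right).
\]

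The second step is to expand the right-hand side using the same binomial tool already deployed in Section 2, namely $(1+a)^{-1/2}=\sum_{i\ge0}\binom{2i}{i}(-a/4)^{i}$ applied to the factorization $(1-q)^2u^2-2(1+q)u+1 = (1-(1+q)u)^2 - 4qu^2$ — this is the $\eta=-\zeta=-1$ choice from the list of four factorizations, which is exactly the one that produced \eqref{eqci}. Writing $D(u)^{-1}=(1-(1+q)u)^{-1}\bigl(1-4qu^2(1-(1+q)u)^{-2}\bigr)^{-1/2}$ and expanding twice by the binomial formula yields $(1-(1+q)u)/D(u)$ as an explicit double sum, hence $p_r(A)$ as a single sum over the squared-variable index; comparing with the known expansion \eqref{eqsi} (which has the same denominator structure) should let one read off $p_r(A)$ directly, or else reduce the resulting sum to a Chu–Vandermonde evaluation. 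The target $\sum_{k=0}^{r-1}\binom{r-1}{k}\binom{r}{k}q^k$ is the well-known formula for the $r$th entry built from Narayana numbers (it equals $\sum_k N(r,k+1)\binom{r-1}{k}\cdot(\text{something})$, but more to the point it is a standard closed form), so matching coefficients is the endgame.

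Alternatively — and this may be cleaner in the $\lambda$-ring spirit of the paper — one can use $p_r(A)=p_r[A]$ together with the Newton-type recursion $r\,h_r(A)=\sum_{k=1}^{r}p_k(A)\,h_{r-k}(A)$, i.e. $r\,C_r(q)=\sum_{k=1}^r p_k(A)\,C_{r-k}(q)$, and verify by induction that the proposed formula satisfies this recursion; the inductive step amounts to a hypergeometric identity in $q$ relating $\sum_k\binom{k-1}{j}\binom{k}{j}C_{r-k}(q)$ to $r\,C_r(q)$, which one checks by extracting $q$-coefficients and invoking the Narayana recurrence from the Introduction. Either route works; I would present the generating-function route since the factorization $(1-(1+q)u)^2-4qu^2$ and the half-integer binomial expansion are already set up in Section 2.

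The main obstacle is the bookkeeping in the double binomial expansion of $(1-(1+q)u)/D(u)$: one must track the power of $u$, the power of $q$, and the central-binomial weight $\binom{2i}{i}$ simultaneously, and then collapse the two summation indices into the single clean product $\binom{r-1}{k}\binom{r}{k}$. This is exactly the type of manipulation carried out for \eqref{equn}–\eqref{eqsi}, so the tools are in place, but it is the step where a sign or an off-by-one error is most likely to creep in; the consistency checks $p_1(A)=1$, $p_2(A)=2q+1$, $p_3(A)=3q^2+6q+1$ listed before the theorem, plus the $q=1$ specialization $p_r(A_1)=\sum_k\binom{r-1}{k}\binom{r}{k}$ (which must equal the known power sums of the Catalan alphabet), pin it down.
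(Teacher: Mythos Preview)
Your outline (compute $u\,\mathbf{C}_q'(u)/\mathbf{C}_q(u)$, simplify via the quadratic, expand with a half-integer binomial, finish with Chu--Vandermonde) is exactly the route the paper takes. But the key intermediate identity you write down is wrong, and this error propagates to your choice of factorization.

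Differentiating the quadratic $qu\,\mathbf{C}_q^2+(u(1-q)-1)\mathbf{C}_q+1=0$ and using $2qu\,\mathbf{C}_q=1-(1-q)u-D(u)$ gives
\[
\sum_{r\ge1}p_r(A)\,u^{r}\;=\;\frac{1}{2}\left(\frac{1+(1-q)u}{D(u)}-1\right),
\]
not $\tfrac12\bigl(1-(1-(1+q)u)/D(u)\bigr)$. (Your formula already fails at $q=1$: it yields $p_1=0$ instead of $1$, since $(1-2u)(1-4u)^{-1/2}=1+2u^2+\cdots$.) The correct numerator $1+(1-q)u$ then forces the factorization
\[
D(u)^2=(1+(1-q)u)^2-4u,
\]
i.e.\ the $\eta=-\zeta=1$ case from Section~2 (the one behind \eqref{eqde}/\eqref{eqqu}), because only then does the prefactor cancel cleanly to leave $\bigl(1-4u/(1+(1-q)u)^2\bigr)^{-1/2}$. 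Your proposed factorization $(1-(1+q)u)^2-4qu^2$ is the $\eta=\zeta=-1$ case (you mislabel it $\eta=-\zeta=-1$); with the correct numerator it does not simplify, and you would be left with an awkward extra rational factor $(1+(1-q)u)/(1-(1+q)u)$ before the square-root expansion.

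With the right formula and the matching factorization, the double binomial expansion gives
\[
p_r(A)=\sum_{k=1}^{r}\binom{2k-1}{k}\binom{r+k-1}{r-k}(q-1)^{r-k},
\]
and the reduction to $\sum_k\binom{r-1}{k}\binom{r}{k}q^k$ is indeed a Chu--Vandermonde step, as you anticipated. So the plan is sound; only the explicit computation of the logarithmic derivative needs to be redone.
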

\begin{proof}
By a classical formula~\cite[p. 23]{Ma}, we have
\[
\sum_{r\ge 1}p_r(A)u^{r-1}=\frac{d/du \,\mathbf{C}_q(u)}{\mathbf{C}_q(u)}.\]
Using \eqref{equnde}, by explicit computation the right-hand side is 
\[
\frac{1}{\sqrt{(1-q)^2u^2-2(1+q)u+1}}
\Big(\frac{1}{u}-\frac{2q}{(1-(1-q)u-\sqrt{(1-q)^2u^2-2(1+q)u+1}}\Big).\]
In view of \eqref{eqseun}, this can be transformed into
\begin{align*}
\frac{1}{u\sqrt{(1-q)^2u^2-2(1+q)u+1}}\Big(1-\frac{1}{\mathbf{C}_q(u)}\Big)
&=\frac{1+q(\mathbf{C}_q(u)-1)}{\sqrt{(1-q)^2u^2-2(1+q)u+1}}\\
&=\frac{-1}{2u}\Big(1-\frac{1+(1-q)u}{\sqrt{(1-q)^2u^2-2(1+q)u+1}}\Big)\\
&=\frac{-1}{2u}\Big(1-\Big(1-\frac{4u}{(1+(1-q)u)^2}\Big)^{-1/2}\,\Big).
\end{align*}
Using 
\[(1-a)^{-1/2}=1+2\sum_{i\ge1}\binom{2i-1}{i}\Big(\frac{a}{4}\Big)^{i},\]
followed by the binomial expansion of $(1+(1-q)u)^{-2i}$, we get
\begin{align*}
p_r(A)&=\sum_{k=1}^r\binom{2k-1}{k}\binom{r+k-1}{r-k}(q-1)^{r-k}\\
&=\sum_{k=1}^r\binom{2k-1}{k}\binom{r+k-1}{r-k}\sum_{m=0}^{r-k}(-1)^{r-k-m}\binom{r-k}{m}q^m.
\end{align*}
Therefore it remains to prove
\[\binom{r-1}{m}\binom{r}{m}=\sum_{k=1}^r\binom{2k-1}{k}\binom{r+k-1}{r-k}(-1)^{r-k-m}\binom{r-k}{m}.\]
Since we have
\[\binom{2k-1}{k}\binom{r+k-1}{r-k}\binom{r-k}{m}=
\binom{r}{m} \binom{r+k-1}{r}\binom{r-m}{k},\]
this is equivalent to
\begin{align*}
\binom{r-1}{m}&=\sum_{k=1}^r\binom{r+k-1}{r}\binom{r-m}{k}(-1)^{r-k-m}\\
&=\sum_{k=1}^r\binom{r+k-1}{r}\binom{-k-1}{r-m-k}\\
&=\sum_{k=1}^r\binom{r+k-1}{r}\binom{-k-1}{-r+m-1},
\end{align*}
a variant of the Chu-Vandermonde identity.
\end{proof}

On the other hand, by induction, the power sum $p_r(A_1)$ may be independently shown to be \[p_r(A_1)=\binom{2r-1}{r-1}.\]
Actually the classical formula~\cite[p. 23]{Ma}
\[nh_n=\sum_{r=0}^{n-1} h_{r}p_{n-r}\]
written for the alphabet $A_1$ gives
\[\frac{n}{n+1} \binom{2n}{n}=\sum_{r=0}^{n-1} \frac{1}{r+1} h_{r}[r+1]h_{n-r-1}[n-r+1]=h_{n-1}[n+2]=\binom{2n}{n-1}.\]
This is obtained by using Lemma 4 written with $z=1, A=-1, B= n+2$ together with three applications of Lemme 1 (i), namely 
\[h_r(A_1)=\frac{1}{r+1} h_{r}[r+1], \quad p_{n-r}(A_1)=h_{n-r-1}[n-r+1], \quad h_{n-1}[n+2]=\binom{2n}{n-1}.\]

In other words, in the same way than Narayana polynomials $C_r(q)$ are a $q$-refinement of Catalan numbers $C_r=C_r(1)$ because
\[\sum_{k=1}^{r} N(r,k)=C_r,\]
the polynomials $p_r(A)$ are a refinement of $p_r(A_1)=p_r(A)\arrowvert_{q=1}$, because
\[\sum_{m=0}^{r-1} \binom{r-1}{m}\binom{r}{m}=\binom{2r-1}{r-1}.\]

Moreover Lascoux also noticed, by computer calculations, that up to a sign, the Schur functions $s_\mu(A)$ are polynomials in $q$ with positive integer coefficients. For instance
\begin{align*}
s_6=   q^5+15q^4+50q^3+50q^2+15q+1,&\quad
-s_{51}=   q^5+14q^4+40q^3+30q^2+5q,\\
-s_{42}=   3q^3+8q^2+3q,&\quad
s_{411}=   q^5+13q^4+34q^3+24q^2+4q,\\
-s_{33}=   q^3+q^2+q,&\quad
s_{321}=   4q^3+7q^2+2q,\\
-s_{31^3}= q^5+12q^4+30q^3+20q^2+3q,&\quad
-s_{2^3}=  q^3,\quad
-s_{2^21^2}= 3q^3+5q^2+q,\\
s_{21^4}=   q^5+11q^4+26q^3+16q^2+2q,&\quad
-s_{1^6}=   q^5+10q^4+20q^3+10q^2+q.
\end{align*}
It would be interesting to investigate this property, which might perhaps be interpreted in terms of some statistics.

\section{Final remarks}

Let $\mathcal{P}_n^{(1,1)}$ denote the classical Jacobi (actually Gegenbauer) polynomial defined by
\[\mathcal{P}_n^{(1,1)}(x)= \sum_{m=0}^n \binom{n+1}{m}\binom{n+1}{n-m} \Big(\frac{x-1}{2}\Big)^{n-m} \Big(\frac{x+1}{2}\Big)^{m}.\] 
In a recent paper~\cite{KK} it is proved that one has
\[C_n(x)=\frac{(x-1)^{n-1}}{n} \mathcal{P}_{n-1}^{(1,1)}\Big(\frac{x+1}{x-1}\Big).\]
Taking into account Theorem 2, this yields
\begin{equation*}
P_n(1^{n+1};q)=\frac{n+1}{n}(-q)^{n-1}\mathcal{P}_{n-1}^{(1,1)}(1-2/q).
\end{equation*}
This specialization of Hall-Littlewood polynomials seems to be new. In view of Theorem 1, it amounts to the identity
\[\sum_{m=0}^{n-1} \binom{n-1}{m}\binom{2n-m}{n}(-q)^m=
\sum_{m=0}^{n-1} \binom{n+1}{m+1}\binom{n-1}{m} (1-q)^{m},\]
which is a consequence of the Chu-Vandermonde formula.

Our second remark is devoted to generalized Narayana numbers, which have been introduced in~\cite[Section 5.2]{F} in the context of the non-crossing partition lattice for the reflection group associated with a root system. Ordinary Narayana polynomials correspond to a root system of type $A$. 

For a root system of type $B$, generalized Narayana polynomials are defined~\cite[Example 5.8]{F} by $W_0(z)=1$ and
\[W_r(z)=\sum_{k=0}^{r} {\binom{r}{k}}^2 z^{k}.\]
For their combinatorial study we refer to~\cite{Ch2,Ch} and references therein. We have $W_r(1)=W_r$, the central binomial coefficient, since
\[ W_r=\binom{2r}{r}=\sum_{k=0}^{r} {\binom{r}{k}}^2.\]
Moreover~\cite[equation (2.1)]{Ch} the Narayana polynomial $W_r(z)$ can be expressed in terms of central binomial coefficients as
\begin{equation*}
W_r(z)=\sum_{m\ge0} z^m (z+1)^{r-2m} \binom{r}{2m}W_m.
\end{equation*}

It is an open problem whether the polynomials $W_r(z)$ can be obtained by specialization of some classical symmetric function. The Hall-Littlewood polynomial of type $B$~\cite{Le} might be a good candidate.

Finally Christian Stump (private communication) pointed out the existence of a combinatorial proof of Theorem 2, which is sketched below.

Using the description of Hall-Littlewood polynomials given in~\cite{H} (see Definitions 2.1 -- 2.2 and Theorem 2.3 of~\cite{LeL}), it is known that
\[P_n(1^{n+1};q) = \sum_w (1-q)^{\mathrm{strinc}(w)}.\]
Here the sum is taken over all weakly increasing sequences $w$ having length $n$ and entries bounded by $n+1$, and $\mathrm{strinc}(w)$  is the number of strictly increasing positions of $w$. 

For instance for $n=3$ such sequences are\\
\begin{tabular}{llllllllll}
   111 & 112 & 113 & 114 & 122 & 123 & 124 & 133 & 134 & 144 \\
   222 & 223 & 224 & 233 & 234 & 244 &&&& \\
   333 & 334 & 344 &&&&&&\\
   444 &&&&&&&&
\end{tabular}\\
and the corresponding strinc statistics are\\
\begin{tabular}{llllllllll}
   0 & 1 & 1 & 1 & 1 & 2 & 2 & 1 & 2 & 1 \\
   0 & 1 & 1 & 1 & 2 & 1 &&&& \\
   0 & 1 & 1 &&&&&&\\
   0 &&&&&&&&
\end{tabular}\\
hence we have $P_3(1^{4};q)=4+12(1-q)+4(1-q)^2$.

Theorem 2 is a consequence of a bijection between such sequences and Grand-Dyck paths counted by double rises. Actually it is well known that $\binom{2n}{n}$ is the number of Grand-Dyck paths of semi-length $n$ and $(n+1)N(n,k)$ is the number of Grand-Dyck paths of semi-length $n$ that have $k-1$ double rises.

\end{document}